\def\esp{\mathbb{E}}
\def\pr{\mathbb{P}}
\def\var{\mathbb{V}ar}
\def\N{\mathbb{N}}
\def\F{\mathcal{F}_n}
\def\R{\mathbb{R}}
\def\X{\boldsymbol{X}}
\def\x{\boldsymbol{x}}
\def\Y{\boldsymbol{Y}}
\def\y{\boldsymbol{y}}
\def\transpose{{^\intercal}}
\newtheorem{thm}{Theorem}[section]
\newtheorem{lemma}{Lemma}[section]
\newtheorem{remark}{Remark}[section]
\newtheorem{corollary}{Corollary}[section]
\newtheorem{proposition}{Proposition}[section]
\begin{document}

\title{Criterion for unlimited growth \\of critical
multidimensional stochastic models}

\author{Etienne Adam \thanks{CMAP, Ecole Polytechnique, CNRS, Universit\'e Paris-Saclay, route de Saclay, 91128 Palaiseau. France. Email : etienne.adam@polytechnique.edu}}
\maketitle
\begin{abstract}
We give a criterion for unlimited growth with positive probability for a large class of multidimensional stochastic models. As a by-product, we recover the necessary and sufficient conditions for recurrence and transience for critical multitype Galton-Watson with immigration processes and also significantly improve some results on multitype size-dependent Galton-Watson processes.\\
\end{abstract}

\textit{Keywords} : Lyapunov function; martingales; stochastic difference equation; critical multitype Galton-Watson process with immigration; multitype size-dependent Galton-Watson process.\\
2010 Mathematics Subject Classification: Primary 60J10, Secondary 60J80
\tableofcontents

 \section{Introduction}

We study conditions on possible unlimited growth for sequences of random vectors $\X_n$, taking values in $\R_+^d$, which verify the stochastic difference equation 
\begin{equation}\label{rec}
\X_{n+1}=\X_nM+g\left(\X_n\right)+\xi_n \,,\, n\in \N,
\end{equation}
where $M$ is a non-negative primitive $d\times d$ matrix, $g: \R_+^d \rightarrow \R_+^d$ is a function such that $\|g(\x)\|=o(\|\x\|)$ when $\|\x\|$ tends to infinity, and $(\xi_n)$ is a sequence of random vectors (taking values in $\R^d$) such that almost surely
\[
\esp\left(\xi_n\big|\F\right)=0
\]
where $\{\F$, $n \in \N\}$ is the natural filtration associated to $\left(\X_n\right)$. We assume that $\X_0 \in \R^d_+$ and that random vectors $\xi_n$ are such that for all $n$, $\X_n$ takes values in $\R^d_+$ almost surely. 

The Perron-Frobenius Theorem \cite[pp. 3-4]{seneta} states that $M$ has a positive Perron root $\rho$ (which is also the spectral radius of $M$).
We call $\X_n$  ``subcritical'' if $\rho<1$, ``supercritical'' if $\rho>1$ and ``critical'' if $\rho=1$.
In the ``subcritical'' case, one has $\pr(\|\X_n\|\underset{n \rightarrow \infty}{\longrightarrow} \infty)=0$ because $\|\X_n\|$ is bounded in mean.
In many applications, one has $\pr(\|\X_n\|\underset{n \rightarrow \infty}{\longrightarrow} \infty)>0$ in the ``supercritical'' case.
This is well-known for the multitype Galton-Watson process with immigration, for instance. 
However, this is not necessarily the case in our general framework. For example, if $g(\X_n)=0$ and $\xi_n=\X_nM_n$ with $M_n$ independent and identically distributed random matrices such that $\pr(M_n=M)=\pr(M_n=-M)=1/2$,
then $\pr(\X_n\rightarrow 0)=1$.

In this article, we focus on the ``critical'' case, henceforth $\rho=1$. 
We define the normalized right and left eigenvectors $u$ and $v$ associated to $\rho$ in such a way
that $vu=u^\transpose u=1$.

We assume that the sequence $(\X_n)$ obeys a weak form of the Markov property. More precisely, we assume that $\esp\left(\left(\xi_nu\right)^2 \big|\F\right)$ is a function of $\X_n$ and will use the notation 
\[
\sigma^2\left(\X_n\right)=\esp\left(\left(\xi_nu\right)^2 \big|\F\right).
\]
The process $(\X_n)$ need not be a Markov chain because the law of $\xi_n$ may depend on $(\X_1, \X_2, \ldots, \X_n)$. However, all our examples are Markov chains.

The case $d=1$ is well understood. The interesting phenomenon is the fact that the growth is unlimited
depends on both the ``drift'' (\textit{i.e.} $g(\X_n)$) and the ``variance'' $\sigma^2\left(\X_n\right)$. 
This was first noticed by Lamperti \cite{lamperti} whose result was generalized by Kersting \cite{kersting}.
But, to the best of our knowledge, there is no criterion when $d>1$. 
Only particular examples were studied. For instance, Klebaner \cite{klebaner89,klebaner91}
gave sufficient conditions for unlimited growth or extinction for state-dependent multitype Galton-Watson processes. 
However, we can build some simple processes which do not satisfy his conditions. Gonzalez et al. gave also conditions for unlimited growth in the supercritical case in \cite{Gonzalez}. Jagers and Sagitov \cite{jagerssagitov} investigated population-size-dependent demographic processes that are particular cases of multidimensional growth models. Moreover in the critical case, they restricted themselves to bounded reproduction and bounded ``drift''.

The aim of this article is to obtain a criterion in any finite dimension that is analogous to the one in dimension one, which is our main result.
The strategy of the proof is the same as in Kersting's article \cite{kersting}. We shall illustrate our criterion with several
classes of examples, notably the one studied by Klebaner \cite{klebaner91} for which we get a complete picture (except for a very special case).

Under technical assumptions on functions $g$ and $\sigma^2$, we prove in this article that the process stays bounded a.s. if
\begin{equation}\label{crit1}
\limsup_{r \rightarrow +\infty} \frac{2rg\left(rv\right)u}{\sigma^2\left(rv\right)}<1,
\end{equation}
while it tends to infinity with positive probability if 
\begin{equation}\label{crit2}
 \liminf_{r \rightarrow +\infty} \frac{2rg\left(rv\right)u}{\sigma^2\left(rv\right)}>1. 
\end{equation}
This criterion is reminiscent of the criterion in Kersting's article \cite{kersting} in unidimensional models. 
In fact, the matrix $M$ preserves the component of $\X_n$ along the direction $v$ whereas it contracts along others directions.

In Section 2, we give our main result and its proof. We apply it in Section 3 to recover a recurrence-transience criterion for critical multitype Galton-Watson processes with immigration and to improve a criterion of almost-sure extinction for state-dependent multitype Galton-Watson processes. In the last section, we prove some lemmas which are used in the proof of Theorem 1.
\section{Criterion for unlimited growth}

\subsection{Assumptions}

For a row vector $\x$, let
\[
\y=\x\left(\mathrm{I}-uv\right).
\]
We assume that there exist a real number $\alpha$ such that $-1 < \alpha <1$, some positive real numbers $c_i$, $d_i$ and some real values functions $f_i$ and $h_i$ defined on $\R^d$, $i \in \{1,2\}$, such that
\[
\begin{cases}g\left(\x\right)u = c_1\left(\x u\right)^\alpha+h_1\left(\y\right)+f_1\left( \x \right) \\
\sigma^2\left(\x\right)=d_1 \left(\x u\right)^{1+\alpha}+h_2\left(\y\right)+f_2\left(\x \right) 
\end{cases}\tag{A1}\label{ineq}
\]
for all $\x \in \R _+ ^d$, with $h_1\equiv0$ if $\alpha \leq 0$ and 
\[
 \begin{cases} |h_1\left(\y\right)|\leq c_2\|\y\|^\alpha\\
  |h_2\left(\y\right)|\leq d_2 \|\y\|^{1+\alpha}\\
f_1\left(\x\right)=o\left(\left(\x u \right)^\alpha\right) \text{  when } \| \x \| \rightarrow +\infty\\
f_2\left(\x\right)=o\left(\left(\x u \right)^{1+\alpha}\right) \text{  when } \| \x \| \rightarrow +\infty,\\
 \end{cases}
\]
where $\| . \|$ stands for the euclidean norm.

We assume that there exist $\delta>0$ and $A_1>0$ such that for all $n \in \N$ and for all $\X_n \in \R^d_+$
\[
\esp\left(\left(\|\xi_n\|\right)^{2+\delta}\big|\F\right)\leq A_1\sigma^{2+\delta}\left(\X_n\right).\tag{A2}\label{moment2+}
\] 

We also need the following condition of unboundedness
\[
\forall C>0, \exists n\in \N \text{ such that }\pr\left(\X_nu\geq C\right)>0
\tag{A3}
\label{dichotomy}.
\]

Finally, we need two more assumptions on function $g$ and $\sigma^2$ to obtain possible unlimited growth for $\X_n$. Firstly, that $g(\x)u$ is bounded away from $0$:\\
There exists $s_1>0$ such that for all $a,b>0$ such that $s_1<a<b<\infty$, if $\x u \in (a,b)$ then $\exists \varepsilon>0$ such that 
\[
g(\x)u>\varepsilon,
\tag{A4}
\label{gnonzero}
\]
and secondly, that $\sigma^2$ is not infinite :
\[
 \forall a>0, \underset{\|\x\|<a}{\sup} \sigma^2(\x)<\infty.
\tag{A5}
\label{sigmafinite}
\]

\subsection{Main theorem}

We now give the criterion of unlimited growth for $\X_n$.
\begin{thm}[Unlimited growth criterion]
\leavevmode\\
 We assume \eqref{ineq} and \eqref{moment2+}.
\begin{enumerate}[i)]
\item If $c_1<\frac{d_1}{2}$ then
$\pr\left( \|\X_n\| \rightarrow +\infty\right)=0$.
\item If $c_1>\frac{d_1}{2}$ and \eqref{dichotomy}, \eqref{gnonzero} and \eqref{sigmafinite} hold then $\pr\left( \|\X_n\| \rightarrow +\infty\right)>0$.
\end{enumerate}
\end{thm}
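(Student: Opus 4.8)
The plan is to reduce the $d$-dimensional problem to a one-dimensional one by projecting onto the Perron direction. Set $Z_n = \X_n u$ (the component of $\X_n$ along $u$) and $\Y_n = \X_n(\mathrm{I}-uv)$ (the part ``transverse'' to the Perron direction). Since $M$ is primitive with Perron root $1$ and $vu=1$, multiplication by $M$ fixes the $u$-component — more precisely $uMu = \rho (u^\transpose u)=1$ after the normalisation, so from \eqref{rec} one gets $Z_{n+1} = Z_n + g(\X_n)u + \xi_n u$ — while on the complement $\mathrm{I}-uv$ the matrix $M$ is a strict contraction (its spectral radius there is $<1$), so $\|\Y_n\|$ is pulled toward a region of size $o(Z_n)$. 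Using assumption \eqref{ineq}, modulo the controllable error terms $h_1,h_2,f_1,f_2$, the scalar process $Z_n$ behaves like a Lamperti–Kersting-type recursion with increment mean $\sim c_1 Z_n^\alpha$ and conditional variance $\sim d_1 Z_n^{1+\alpha}$, and the condition $c_1 > d_1/2$ is exactly the Kersting criterion $\liminf 2 z \cdot(\text{drift})/(\text{variance}) > 1$ forcing $Z_n\to\infty$ with positive probability.

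The key steps, in order, are: (1) establish the coordinate decomposition above and show that on the event that $\|\X_n\|$ is large, $\|\Y_n\|$ is small relative to $Z_n$ — concretely, that $\|\Y_n\|/Z_n\to 0$ on that event — so that the $h_i(\Y_n)$ terms are genuinely lower-order (here one uses that $\|\Y_{n+1}\| \le \theta\|\Y_n\| + \|g(\X_n)(\mathrm{I}-uv)\| + \|\xi_n(\mathrm{I}-uv)\|$ with $\theta<1$, combined with $\|g(\x)\|=o(\|\x\|)$); (2) build a Lyapunov function $\varphi(Z_n)$ of Kersting type (roughly $\varphi(z) \sim z^{1-\alpha}$ up to the precise logarithmic correction tuned to the ratio $2c_1/d_1$) and compute $\esp(\varphi(Z_{n+1})-\varphi(Z_n)\mid\F)$ via a Taylor expansion to second order; the second-derivative term contributes $-\tfrac12\varphi''(Z_n) d_1 Z_n^{1+\alpha}$ and the first-derivative term $+\varphi'(Z_n)c_1 Z_n^{\alpha}$, and with $2c_1>d_1$ one arranges the sign so that $\varphi(Z_n)$ is a bounded submartingale (or that $1/\varphi$ is a bounded supermartingale) on the set where $Z_n$ is large; (3) control the remainder in the Taylor expansion using the $(2+\delta)$-moment bound \eqref{moment2+} — this is what makes the third-order term summable; (4) invoke the martingale convergence theorem together with assumptions \eqref{dichotomy} (to start from arbitrarily large $Z_n$ with positive probability), \eqref{gnonzero} (so the drift cannot stall the process at intermediate levels) and \eqref{sigmafinite} (so the process cannot jump to infinity instantaneously and the local dynamics are well-defined), to conclude that with positive probability $Z_n\to\infty$, hence $\|\X_n\|\to\infty$. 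I expect these Lyapunov/Taylor estimates and the gluing of the one-step bounds into a global statement to be packaged in the lemmas promised for the final section, so I would cite those as needed.

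The main obstacle is step (1)–(2) interaction: making rigorous the claim that the transverse part $\Y_n$ stays negligible \emph{uniformly enough} that the error terms $h_1(\Y_n), h_2(\Y_n)$ and $f_1(\X_n), f_2(\X_n)$ do not destroy the delicate sign of the Lyapunov drift. Because the Kersting criterion is sharp ($c_1$ versus exactly $d_1/2$), the Lyapunov function must be calibrated precisely, and any error term that is not truly $o(Z_n^{\alpha})$ (resp. $o(Z_n^{1+\alpha})$) on the relevant event would break the argument; so the bulk of the work is a careful, quantitative ``$\Y_n$ is small'' estimate feeding into a careful remainder estimate. A secondary subtlety is that $(\X_n)$ is only assumed to satisfy a weak Markov property, so all conditional computations must be done with respect to $\F$ and one cannot appeal to a transition kernel; this is mostly bookkeeping but needs care when iterating. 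Once the scalar submartingale is in hand, the positive-probability conclusion follows from a standard optional-stopping / martingale-convergence argument, using \eqref{dichotomy} to enter the favourable region.
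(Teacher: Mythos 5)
Your overall strategy --- projecting onto the Perron direction, viewing $Z_n=\X_nu$ as a Lamperti--Kersting scalar recursion, and combining Lyapunov functions with martingale convergence --- is indeed the paper's strategy, but your outline has two genuine gaps. First, your step (1) rests on the claim that $\|\Y_n\|/Z_n\to 0$ on the relevant event, which you yourself identify as the main obstacle and never prove. Under \eqref{ineq} this claim is both delicate and unnecessary: the hypotheses only give $|h_1(\y)|\le c_2\|\y\|^{\alpha}$ and $|h_2(\y)|\le d_2\|\y\|^{1+\alpha}$, so the transverse error terms may be of exactly the same order as the main terms $c_1(\x u)^{\alpha}$ and $d_1(\x u)^{1+\alpha}$ (a priori one only has $\|\Y_n\|\le b\,\X_nu$). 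Consequently a one-step Taylor/Lyapunov estimate cannot in general be given the right sign, however carefully $\varphi$ is calibrated. The paper circumvents this with a multi-step estimate: because $M-uv$ is a contraction, $\sum_{i=0}^{k-1}\esp\left(\|\Y_{n+i}\|^{\gamma}\,\big|\,\F\right)\le C\|\Y_n\|^{\gamma}+o\left(\left(\X_nu\right)^{\gamma}\right)$ with $C$ independent of $k$ (Lemma \ref{Y}), while the drift and variance of $\Delta_{n,k}=\X_{n+k}u-\X_nu$ grow linearly in $k$ (Lemma \ref{lemmeineq}); taking $k$ large then forces the sign in Lemmas \ref{loglyapu} and \ref{1/loglyapu}. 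This $k$-step device is the key idea missing from your plan.

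Second, your step (4) conflates ``the Lyapunov (super)martingale converges'' with ``$Z_n\to\infty$ with positive probability.'' Convergence of $\min\left(L\left(\X_{nk+i}u\right),L\left(s\right)\right)$ only yields that, along each arithmetic progression of step $k$, either $\limsup\X_{nk+i}u\le s$ or $\X_{nk+i}u$ converges in $(s,\infty]$. One must still (a) exclude convergence to a finite limit larger than $s$, which the paper does by a separate martingale-plus-Kronecker argument using \eqref{gnonzero} and \eqref{sigmafinite} (this, not ``no instantaneous jump to infinity,'' is the actual role of \eqref{sigmafinite}); (b) glue the $k$ residues together, i.e.\ rule out that $\X_{nk}u\to\infty$ while $\X_{nk+1}u$ stays bounded, via Markov's inequality and Borel--Cantelli; and (c) use \eqref{dichotomy} to exclude $\pr\left(\limsup_n\X_nu\le s\right)=1$. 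None of these appears in your outline, and you do not address part i) at all, although that half is comparatively routine (a stopped $\log$-supermartingale) once the $k$-step estimates are in place.
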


Compared to \eqref{crit1} and \eqref{crit2}, we give a criterion in the special case where $g$ has a dominant term in $\left(\x u\right)^\alpha$. 
This may seem restrictive, nevertheless most of the applications deal with $\alpha=0$, 
which means that $g$ is bounded by a constant.
The case $c_1=\frac{d_1}{2}$ remains unexplored except for critical multitype Galton-Watson processes with immigration under some moment assumptions (see remark in Section 3).

\subsection{Proof of the theorem}
The strategy of the proof of the theorem consists in showing that there exist an integer $k$ and a real-valued function $L$ such that
\[
\mathbb{E}\left(L \left(\X_{n+k}u\right) | \mathcal{F}_n\right) \leq L \left(\X_nu\right)
\] 
when $\X_nu$ is larger than some constant. Then we build a supermartingale and proceed by using the martingale convergence theorem.

Before proving the theorem, we state two key lemmas providing us Lyapounov function. The proofs involve some technical computations and are deferred to Section \ref{section:lemma-proof}.
\begin{lemma}\label{loglyapu}
Let us assume (A1) and (A2). If $c_1<\frac{d_1}{2}$, then there exists $s>0$ and $k\in \N^*$ such that,
\[
\esp\left(\log\left(\X_{n+k}u\right)\big|\F\right)\leq \log\left(\X_nu\right), \text{ if } \X_nu>s.
\]
\end{lemma}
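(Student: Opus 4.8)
The plan is to expand $\X_{n+k}u$ explicitly and then use a Taylor expansion of $\log$. First I would iterate the recursion \eqref{rec} once to write, using $Mu = u$ (since $u$ is the right Perron eigenvector with $\rho = 1$),
\[
\X_{n+1}u = \X_n M u + g(\X_n)u + \xi_n u = \X_n u + g(\X_n)u + \xi_n u,
\]
so the component along $u$ is exactly preserved by $M$. More generally, iterating $k$ times,
\[
\X_{n+k}u = \X_n u + \sum_{j=0}^{k-1} g(\X_{n+j})u + \sum_{j=0}^{k-1}\xi_{n+j}u =: \X_n u + D + \Xi,
\]
where $D$ is the accumulated drift and $\Xi$ is a martingale increment (conditionally on $\F$, each $\xi_{n+j}u$ has mean zero). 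The idea is that for $\X_n u$ large, by (A1) we have $g(\X_n)u \approx c_1(\X_n u)^\alpha$ and $\sigma^2(\X_n) \approx d_1(\X_n u)^{1+\alpha}$, so $D$ is of order $(\X_n u)^\alpha$ while the conditional variance of $\Xi$ is of order $(\X_n u)^{1+\alpha}$ — the quadratic term in the Taylor expansion dominates.

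Next I would write $\log(\X_{n+k}u) = \log(\X_n u) + \log\!\bigl(1 + \frac{D+\Xi}{\X_n u}\bigr)$ and expand the second logarithm to second order: $\log(1+x) \le x - \tfrac{1}{2}x^2 + C|x|^3$ for $|x|$ bounded. Taking conditional expectations, the first-order term contributes $\esp(D\mid\F)/(\X_n u)$, which is $\approx k c_1 (\X_n u)^{\alpha-1}$; the second-order term contributes $-\tfrac{1}{2}\esp((D+\Xi)^2\mid\F)/(\X_n u)^2$, and since $\esp(\Xi^2\mid\F) \approx k d_1 (\X_n u)^{1+\alpha}$ dominates the $D^2$ and cross terms, this is $\approx -\tfrac{k d_1}{2}(\X_n u)^{\alpha-1}$. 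So
\[
\esp\bigl(\log(\X_{n+k}u)\mid\F\bigr) - \log(\X_n u) \;\lesssim\; k\Bigl(c_1 - \frac{d_1}{2}\Bigr)(\X_n u)^{\alpha-1} + (\text{lower order}),
\]
which is negative for $\X_n u$ large precisely because $c_1 < d_1/2$. To make this rigorous I would need to control (a) the error terms $h_i(\y)$, $f_i$: note $\|\y\| \le \|\x\|$ but $\|\x\|$ need not be comparable to $\x u$ in general — however, after one application of $M$ the primitivity contracts the transverse component $\y$, so one can show $\|\X_{n+1}(\mathrm{I}-uv)\|$ is controlled by $\X_n u$ up to constants, which is why $k$ may need to be taken $\ge 2$ and why we pass to $\X_{n+k}$ rather than $\X_{n+1}$; and (b) the contribution of the event where $|D+\Xi|/(\X_n u)$ is not small, handled by the $(2+\delta)$-moment bound (A2) via a Markov/Chebyshev argument showing this event has small enough probability and the integrand there is $o((\X_n u)^{\alpha - 1})$.

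The main obstacle I expect is item (a): relating the transverse coordinate $\y = \x(\mathrm{I}-uv)$ back to the scalar $\x u$ so that the error bounds $c_2\|\y\|^\alpha$ and $d_2\|\y\|^{1+\alpha}$ are genuinely lower order than the main terms $(\x u)^\alpha$ and $(\x u)^{1+\alpha}$. Since $\X_n$ lives in $\R_+^d$ and $v$ is a positive vector, one has $\X_n u \asymp \|\X_n\|$ on the positive cone up to constants — more precisely $\X_n u = \X_n v^\transpose \cdot (\text{stuff})$... the clean fact is that on $\R_+^d$, $c\|\x\| \le \x u \le \|\x\|$ for some $c>0$ because $u$ is a strictly positive vector, hence $\|\y\| \le (1+\|uv\|)\|\x\| \le C\, \x u$. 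This gives $|h_1(\y)| \le c_2 C^\alpha (\x u)^\alpha$, which is the same order as the main term, not smaller — so the constants matter and one cannot simply absorb $h_1$. The resolution is that the contracting action of $M$ on the transverse part means that after iterating, the transverse component of $\X_{n+j}$ for $j\ge 1$ is $o(\X_n u)$ in a suitable averaged sense, or alternatively that one only needs the $c_1 - d_1/2 < 0$ gap to beat the worst-case constant contributions from $h_1, h_2$ after choosing $k$ large enough — I would carry out this bookkeeping carefully, splitting the sum over $j=0$ (the genuinely problematic term, bounded crudely) from $j \ge 1$ (where transverse contraction helps), and choosing $k$ large so the good terms outweigh the single bad one. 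The $f_i$ terms are harmless since they are genuinely $o$ of the main terms by hypothesis, and (A5) / finiteness assumptions are only needed to ensure everything is well-defined for moderate $\X_n$, not for the asymptotic estimate.
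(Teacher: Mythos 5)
Your proposal follows essentially the same route as the paper: expand $\X_{n+k}u$ along the Perron direction using $Mu=u$, apply a second-order expansion of $\log$ with the large-increment event controlled via the $(2+\delta)$-moment assumption (the paper packages this as Kersting's inequality \eqref{kerst1}), and handle the transverse terms $h_1,h_2$ — which, as you correctly note, are only $O((\x u)^\alpha)$ and $O((\x u)^{1+\alpha})$ with constants — by exploiting the spectral contraction of $M-uv$ so that their total contribution over $k$ steps is bounded uniformly in $k$, and then taking $k$ large so the gap $k(c_1-\tfrac{d_1}{2})<0$ dominates (this is exactly the paper's Lemmas \ref{lemmeineq} and \ref{Y} and its final choice of $\varepsilon$ and $k$). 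The only ingredient you leave implicit is the control of the intermediate-time moments $\esp\left(\left(\X_{n+j}u\right)^{\alpha}\big|\F\right)$ and $\esp\left(\left(\X_{n+j}u\right)^{1+\alpha}\big|\F\right)$ for $1\le j<k$ (the paper's Lemma \ref{X}, delicate mainly when $\alpha<0$), which belongs to the bookkeeping you announce.
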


\begin{lemma}\label{1/loglyapu}
Let us assume (A1) and (A2). If $c_1>\frac{d_1}{2}$, then there exists $s>0$ and $k \in \N^*$ such that, 
\[
\esp\left(L\left(\X_{n+k}u\right) \big| \F \right)  \leq L\left(\X_nu\right), \text{ if } \X_nu>s,
\]
with $L\left(x\right)=\left(\log x\right)^{-1}$.
\end{lemma}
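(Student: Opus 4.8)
The plan is to perform a Taylor expansion of $L(x) = (\log x)^{-1}$ around the point $x = \X_n u$, applied to the scalar random variable $\X_{n+k} u$, and to show that the linear term (which carries the drift $g$) beats the quadratic term (which carries the variance $\sigma^2$) precisely because $c_1 > d_1/2$. First I would iterate the recurrence \eqref{rec} to express $\X_{n+k}$ in terms of $\X_n$: since $M u = u$ (as $\rho = 1$ with right eigenvector $u$), projecting onto $u$ gives
\[
\X_{n+k} u = \X_n u + \sum_{j=0}^{k-1} g(\X_{n+j}) u + \sum_{j=0}^{k-1} \xi_{n+j} u .
\]
The role of $k > 1$ is that $M$ contracts on the complement of $\mathrm{span}(u)$ (primitivity of $M$), so after $k$ steps the transverse component $\Y_{n+j} = \X_{n+j}(\mathrm{I} - uv)$ has been damped enough that, using the bounds $|h_1(\y)| \le c_2 \|\y\|^\alpha$ and $|h_2(\y)| \le d_2 \|\y\|^{1+\alpha}$ from (A1), the contributions of $h_1, h_2$ along $u$ are negligible compared to the $(\X_n u)^\alpha$ and $(\X_n u)^{1+\alpha}$ terms. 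So modulo lower-order terms one has, conditionally on $\F$, that $\esp(\sum_j g(\X_{n+j}) u \mid \F) \approx k c_1 (\X_n u)^\alpha$ and $\esp((\sum_j \xi_{n+j} u)^2 \mid \F) \approx k d_1 (\X_n u)^{1+\alpha}$ (the cross terms vanish by the martingale property $\esp(\xi_{n+j} u \mid \mathcal{F}_{n+j}) = 0$).

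Next I would write, with $X = \X_n u$ and $\Delta = \X_{n+k} u - X$,
\[
L(X + \Delta) = L(X) + L'(X)\,\Delta + \tfrac{1}{2} L''(X)\,\Delta^2 + R,
\]
and take conditional expectation. A direct computation gives $L'(x) = -x^{-1}(\log x)^{-2}$ and $L''(x) = x^{-2}(\log x)^{-2} + 2 x^{-2}(\log x)^{-3}$, so to leading order $L'(x) \sim -x^{-1}(\log x)^{-2}$ and $L''(x) \sim x^{-2}(\log x)^{-2}$. Therefore, for $X$ large,
\[
\esp\big(L(\X_{n+k}u)\mid\F\big) - L(X)
\approx -\frac{k c_1 (X)^\alpha}{X (\log X)^2} + \frac{1}{2}\cdot\frac{k d_1 (X)^{1+\alpha}}{X^2 (\log X)^2}
= \frac{k (X)^{\alpha-1}}{(\log X)^2}\left(\frac{d_1}{2} - c_1\right),
\]
which is $\le 0$ since $c_1 > d_1/2$. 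This is the heart of the argument; everything else is controlling error terms.

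The main obstacle will be making the error control rigorous. Three sources of error must be dominated by the main negative term $-k(c_1 - d_1/2)(X)^{\alpha-1}(\log X)^{-2}$: (a) the $f_1, f_2$ contributions, which are $o((X)^\alpha)$ and $o((X)^{1+\alpha})$ respectively by (A1) — these are handled by choosing $s$ large; (b) the transverse $h_1, h_2$ contributions — here I need to first establish (or invoke from Section \ref{section:lemma-proof}, presumably a preliminary lemma) that $\|\Y_{n+j}\|$ becomes small relative to $\X_n u$ after the contraction, which is where primitivity of $M$ and the choice of $k$ enter; and (c) the Taylor remainder $R$ together with the higher moments of $\Delta$. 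For (c), the point is that $\Delta$ has conditional second moment of order $(X)^{1+\alpha}$, hence typical size $(X)^{(1+\alpha)/2} = o(X)$ since $\alpha < 1$, so $\Delta/X$ is small; to control the remainder uniformly (including on the event that $\Delta$ is atypically large) I would use the $(2+\delta)$-moment bound (A2), $\esp(\|\xi_n\|^{2+\delta}\mid\F) \le A_1 \sigma^{2+\delta}(\X_n)$, via a truncation argument — split according to whether $|\Delta| \le \varepsilon X$ or not, bound the Taylor remainder on the first event and use a Markov/Hölder estimate with the $(2+\delta)$-th moment on the second. One must check that the slowly varying factor $(\log x)^{-2}$ does not interfere: since $L', L''$ and $L'''$ all involve only powers of $\log$, ratios like $L''(X \pm \varepsilon X)/L''(X) \to 1$, so the logarithmic factors behave tamely under the perturbation. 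Assembling these estimates and choosing first $\varepsilon$ small, then $k$ appropriately (so the transverse part is damped), then $s$ large, yields the claimed supermartingale-type inequality.
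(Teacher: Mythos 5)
Your overall scheme (second--order expansion of $L$ at $x=\X_nu$ applied to the $k$-step increment $\Delta_{n,k}=\X_{n+k}u-\X_nu$, identification of the leading term $k\left(\frac{d_1}{2}-c_1\right)(\X_nu)^{\alpha-1}(\log \X_nu)^{-2}$, and control of the remainder and of atypically large $|\Delta_{n,k}|$ through the $(2+\delta)$-moment bound (A2) and a truncation) is essentially the paper's, which packages the expansion plus truncation into Kersting's inequality \eqref{kerst2}. The genuine gap is in your point (b), the treatment of the transverse terms $h_1,h_2$. You claim that after $k$ steps the contraction of $M$ off $\mathrm{span}(u)$ makes the contributions of $h_1(\Y_{n+j})$, $h_2(\Y_{n+j})$ negligible compared with $(\X_nu)^{\alpha}$, resp.\ $(\X_nu)^{1+\alpha}$. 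This is false: the lemma only assumes $\X_nu>s$, and the initial transverse component satisfies merely $\|\Y_n\|\le b\,\X_nu$ (positivity of the coordinates of $u$), so already the $j=0$ term $h_1(\Y_n)$ can be of size $c_2b^{\alpha}(\X_nu)^{\alpha}$, i.e.\ of exactly the same order as the drift term $c_1(\X_nu)^{\alpha}$; no choice of $k$ makes it small, since the first steps of the block are not yet damped. Consequently your displayed ``heart of the argument'', which retains only $-kc_1$ and $+k\frac{d_1}{2}$, omits a competing contribution of the same order, and with the justification you give the sign of the total coefficient cannot be concluded (for $k=1$, say, it may well be positive).

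The correct mechanism --- and the actual role of $k$ --- is different: because $(M-uv)^{i}$ contracts geometrically and the noise-generated transverse fluctuations are of the lower order $(\X_nu)^{(1+\alpha)/2}$, the sum over the whole block satisfies $\sum_{i=0}^{k-1}\esp\left(\|\Y_{n+i}\|^{\gamma}\,\big|\,\F\right)\le C\|\Y_n\|^{\gamma}+o\left((\X_nu)^{\gamma}\right)$ with $C$ \emph{independent of $k$} (Lemma \ref{Y}), whence in \eqref{moment1}--\eqref{moment2} the transverse error constants $c_2',d_2'$ do not grow with $k$, while the drift and variance terms grow linearly in $k$. One then chooses $k$ so large that $k\left(\frac{d_1}{2}-c_1\right)+b^{\alpha}c_2'+\frac{b^{1+\alpha}d_2'}{2}<0$, and only afterwards takes $s$ large. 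Your proposal gestures at ``choosing $k$ appropriately (so the transverse part is damped)'', but without the $k$-uniform bound above and the comparison ``$k$-linear main term versus $k$-independent transverse term'' the argument does not close.
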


\begin{proof}[Proof of Theorem 1]
\leavevmode\\
Without loss of generality, we assume that  for every $n \in \N$, $\X_nu \geq 3$ almost surely (otherwise consider $\X_n+3v$ instead of $\X_n$).

i) We start by considering the case where $c_1<\frac{d_1}{2}$.

Following \cite{kersting}, let us assume that $\X_nu \rightarrow +\infty$ with positive probability. Let $U_n=\X_{nk}u$, then $U_n \rightarrow +\infty$ with positive probability, too. Thus there is a positive integer $T$ such that 
\begin{equation}\label{contradiction}
\pr\left(\underset{n \geq T}{\inf} U_n > s, U_n \rightarrow +\infty\right)>0.
\end{equation}
Let $\tau=\inf \{ n\geq T : U_n \leq s \}$ with the convention that $\tau=+\infty$ if $\underset{n\geq T}{\inf} U_n > s$. Let
\[
V_n=\begin{cases}
\log\left(U_{n+T}\right) \text{ if }n+T \leq \tau,\\
\log\left(U_{\tau}\right) \text{ otherwise.} 
\end{cases}
\]
Since $(V_n)$ is a positive supermartingale by Lemma \ref{loglyapu}, it converges almost surely and we obtain a contradiction with \eqref{contradiction}.

ii) We now turn to the case where $c_1>\frac{d_1}{2}$.

Let $s>0$ be large enough, such that the statement of Lemma \ref{1/loglyapu} holds true. Let $A,B,C$ and $D$, four sets defined as follows :

\begin{itemize}
\item $A=\Big\{  \underset{n \rightarrow \infty}{\limsup} \X_nu \leq s\Big\}$
\item $B=\Big\{ \underset{n \rightarrow \infty}{\limsup} \X_nu<\infty \text{ and } \exists i \in \left\{0, \ldots, k-1\right\}$, $\X_{nk+i}u \underset{n \rightarrow \infty}{\longrightarrow} R_i \text{ with } s<R_i<\infty \Big\}$
\item $C=\Big\{ \exists i \in \left\{0, \ldots, k-1\right\}$, $\X_{nk+i}u \underset{n \rightarrow \infty}{\longrightarrow} \infty 
\text{ and }\underset{n \rightarrow \infty}{\limsup}\X_{nk+i+1}u<\infty\Big\}$
\item $D=\Big\{ \X_{n}u \underset{n \rightarrow \infty}{\longrightarrow} \infty\Big\}$.
\end{itemize}

We want to prove that $\pr(D)>0$. We first prove that $\pr(A \cup B \cup C \cup D)=1$ (Step 1). Secondly that $\pr(A)<1$ (Step 2). Thirdly that $\pr(B)=0$ (Step 3) and we conclude by proving that $\pr(C)=0$ (Step 4).

Step 1 : Let $i \in \left\{0, \ldots, k-1\right\}$. Then $U_{i,n}=\min \left(L\left(\X_{nk+i}u\right),L\left(s\right)\right)$ is a non-negative bounded supermartingale which converges almost surely and in mean by Lemma \ref{1/loglyapu}. Therefore, either $\X_{nk+i}u$ converges to a number greater than $s$, possibly $\infty$, or $\underset{n \rightarrow \infty}{\limsup} \X_{nk+i}u \leq s$. So, $\pr(A \cup B \cup C \cup D)=1$.

Step 2 : Let us assume that $\pr\left(A\right)=1$. Then, for all $i \in \left\{0, \ldots, k-1\right\}$, $U_{i,n}$ converges to $L\left(s\right)$. Since $\esp\left(\min \left(L\left(\X_{n+kl}u\right),L\left(s\right)\right)\right)\leq \esp\left(\min \left(L\left(\X_{n}u\right),L\left(s\right)\right)\right)$, for all $l\in \N$, we obtain
\[
 \esp\left(\min \left(L\left(\X_{n}u\right),L\left(s\right)\right)\right)\geq L\left(s\right).
\]
But by definition, $\min \left(L\left(\X_{n}u\right),L\left(s\right)\right)\leq L\left(s\right)$, therefore $\min \left(L\left(\X_{n}u\right),L\left(s\right)\right)=L\left(s\right)$ a.s. or $\X_{n}u\leq s$ for all $n$, which contradicts \eqref{dichotomy}.

Thus, $\pr\left(B \cup C \cup D\right)>0$.

Step 3 : Without loss of generality, let us assume $B$ with $i=0$ and let $R=2\sup \X_nu$.

By \eqref{gnonzero}, there exists $s_1<R_0$ and $\varepsilon>0$ such that for all $\x$ such that $\x u\in (s_1,R)$, $g\left(\x\right)u >\varepsilon$.

Since $\X_{nk}u$ converges to $R_0$, there exists $N_0$ such that  $\forall n\geq N_0$, $\X_{nk}u \in \left(s_1,R\right)$.

Thus, we can choose $N$ such that $\forall n\geq N$, $\X_nu \leq R$ and $g\left(\X_{N}\right)u \neq 0$.

Consider now,
\[
A_n=\sum_{l=0}^n g\left(\X_{N+l}\right)u, 
\]
\[
 M_n=\sum_{l=0}^n A_l^{-1} \xi_{N+l}u.
\]
One can check that $M_n$ is a martingale.

Further, by \eqref{gnonzero} and \eqref{sigmafinite},
\begin{align*}
 \sum_{n=0}^\infty \esp\left(\left(M_{n+1}-M_n\right)^2 \big| \mathcal{F}_{N+n+1}\right) & = \sum_{n=1}^\infty A_{n}^{-2}\sigma^2\left(\X_{N+n}\right)\\
& \leq C \sum_{n=1}^\infty A_{n}^{-2}\\
& \leq C' \sum_{n=1}^\infty A_{n}^{-2}\left(A_{n}-A_{n-1}\right)\\
& \leq C' \int_{A_1}^\infty t^{-2}dt<\infty.
\end{align*}
By a martingale convergence theorem, $M_n$ converges almost surely. Since $\X_{nk}u$ converges to $R$, $A_n \rightarrow +\infty$, and by Kronecker's lemma
\[
 \sum_{l=1}^n\xi_lu=o\left(A_n\right).
\]
For $n\geq N$, we get the contradiction :
\[
 \X_{n+1}u=\X_Nu+\sum_{l=N}^ng\left(\X_l\right)u+\sum_{l=N}^n \xi_lu=A_n+o\left(A_n\right) \rightarrow +\infty.
\]

Thus $\pr(B)=0$, so with positive probability, there exists $i \in \left\{0, \ldots, k-1\right\}$ such that $\X_{nk+i}u$ tends to infinity. Let us prove that this implies that $\X_nu$ tends to infinity.

Step 4 : Without loss of generality, let us assume that $\X_{nk}u$ tends to infinity.\\
Let $\gamma=\frac{2}{1-\alpha}$, $r>s$ and $n$ such that $n^\gamma>2r$.\\
Let $\Gamma_n=[n^\gamma,(n+1)^\gamma)$ be a sequence of intervals and $N \left(l\right)$ be an increasing sequence of stopping times defined by
\[
 N(l)=\inf \{ n\> N(l-1) \text{ such that } \X_{nk}u \in [l^\gamma, \infty)\}.
\]
By Markov's inequality, we obtain
\begin{align*}
 \pr\left(\X_{N\left(n\right)k+1}u\leq 2r \big| \X_{N\left(n\right)k}u\geq n^\gamma\right) &  \leq \underset{l\geq n}{\sup} \, \, \pr\left(\xi_{N(n)k}u \leq \left(2r-l^\gamma\right)\big| \X_{N\left(n\right)k}u \in \Gamma_l\right)\\
& \leq  \underset{l\geq n}{\sup} \, \, \pr\left(\left(\xi_{N\left(n\right)k}u\right)^2 \geq \left(l^\gamma-2r\right)^2 \big| \X_{N\left(n\right)k}u \in \Gamma_l\right)\\
& \leq \underset{l\geq n}{\sup} \, \, \frac{Kl^{\gamma\left(\alpha+1\right)}}{l^{2\gamma}}\\
& \leq K' \frac{1}{n^2}.
\end{align*}

Hence by Borel-Cantelli lemma, for any $r>0$ sufficiently large, 
\[
\pr\left(\underset{n \rightarrow \infty}{\limsup} \X_{nk+1}u \leq r \big| \X_{nk}u\rightarrow \infty\right)=0.
\]
Thus, $\X_{nk+1}u$ converges to infinity so $\pr(C)=0$.\\
Since $\pr\left(A \cup B\cup C \cup D\right)=1$ and $\pr(B)=\pr(C)=0$, thus $\pr(A \cup D)=1$. Since $\pr(A)<1$, we have the desired result 
\[
\pr(D)=\pr\left(\X_{n}u\rightarrow \infty\right)>0. 
\]
\end{proof}

\section{Applications}

Our applications focus on the case $\alpha=0$. 
This is because we consider population models with finite variance of number of offsprings per individual. 
Thus, $\sigma$ has to be of the order of $\x u$ and $g$ of the order of a constant.
Notice also that all models here are Markov chains, although our result is applicable to processes that need not be Markov chains. 
In the particular case of irreducible Markov chains, the process has an unlimited growth with positive probability if and only if the chain is transient. Conversely, it does not tend to infinity a.s. if and only if the chain is recurrent.

\subsection{Multitype Galton-Watson process with immigration}

A first class of processes governed by the stochastic difference equation \eqref{rec} is given by critical multitype Galton-Watson processes with immigration. Kawazu \cite{kawazu} gave a criterion of recurrence and transience that he proved by using generating functions. We recover here the same result.

Let $\left(Z_n\right)$ be a critical multitype Galton-Watson process with immigration with $d$ types.
At generation $n$, the $k$-th individual of type $i$, $i \in \{1, \ldots, d\}$ and \\$k \in \{1, \ldots, \left(Z_n\right)_i\}$, gives birth to $X_{i,j,k,n}$ individuals of type $j$,
$j \in \{1, \ldots, d\}$.
The random vectors $(X_{i,j,k,n})_{j \in \{1, \ldots, d\}}$ with $i \in \{1, \ldots, d\}$, $k \geq 1$ and $n \in \N$ are independent with distribution depending only on $i$.
To alleviate notations, we write $X_{i,j}$ for $X_{i,j,1,1}$.

We assume that for all $i$, $j \in \{1, \ldots, d\}$, $\pr(X_{i,j}=0)>0$.
Let $M=\big(\esp(X_{i,j})\big)_{i,j}$ be the mean matrix. 
We assume that $M$ is a non-negative primitive matrix.
Since the process is critical, the largest eigenvalue of $M$ is $1$. 
Let $u$ (resp. $v$) the right (resp. the left) eigenvector corresponding to this eigenvalue.
At each generation $n$, $A_n \in \N^d$ individuals immigrate.
The random variables $A_n$ are independent and identically distributed, with $\pr(A_1=(0, \ldots,0))>0$, $\esp(A_1)=a$ and $\var(A_1u)=\tau^2$.
The random variables $A_n$ are also independent of all variables $X_{i,j,k,n}$.
Therefore we have $\esp(Z_{n+1})=\esp(Z_n)M+a$.

We assume that there exists $\delta>0$ such that  for $\left(i,j\right) \in \{1, \ldots, d\}^2$, 
\begin{equation}\label{hypo1}
\esp(X_{i,j}^{2+\delta})<+\infty 
\text{ and }
\esp\left(\left(A_1u\right)^{2+\delta}\right)<+\infty. 
\end{equation}
Let $\Gamma_i=\left(\text{Cov}\left(X_{i,j},X_{i,j'}\right)\right)_{j,j' \in \{1, \ldots, d\}}$ be the matrix of the covariances of offspring distributions.
Let 
\[
V(z)=\sum_{i=1}^{d}z_i\Gamma_i, 
\]
for $z \in \R^d$.
We obtain the stochastic difference equation 
\[
Z_{n+1}=Z_nM+a+\xi_n
\]
with\[
\xi_n=\left(\sum_{i=1}^{d}\sum_{k=1}^{\left(Z_n\right)_i}\left\{X_{i,j,k,n}-\esp\left(X_{i,j,k,n}\right)\right\}+A_ne_j-\esp\left(A_ne_j\right)\right)_{1\leq j \leq d}
\]
where the $(e_j)_{j \in \{1,\ldots,d\}}$ are the standard unit vectors and
\[
\esp((\xi_nu)^2 \big|\F)=u^\transpose V(Z_n)u+\tau^2.
\]
\begin{proposition}
The process $(Z_n)$ is 
\begin{itemize}
 \item recurrent if $2au<u^\transpose V(v)u$
\item  transient if $2au>u^\transpose V(v)u$.
\end{itemize}
\end{proposition}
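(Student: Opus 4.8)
The plan is to deduce the Proposition directly from Theorem 1, applied to $\X_n=Z_n$ with exponent $\alpha=0$. First I would note that $(Z_n)$ is an instance of \eqref{rec}: the drift is the constant vector $g\equiv a$, so $\|g(\x)\|=\|a\|=o(\|\x\|)$; the martingale increments are the $\xi_n$ displayed above; the mean matrix $M$ is primitive, non-negative, with Perron root $1$; and the process takes values in $\N^d\subseteq\R_+^d$. Moreover $\esp((\xi_nu)^2\mid\F)=u^\transpose V(Z_n)u+\tau^2$, so $\sigma^2(\x)=u^\transpose V(\x)u+\tau^2$. It then remains to check assumptions (A1)--(A5).

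For (A1): since $g(\x)u=au$ is constant, I would take $c_1=au$, $h_1\equiv0$, $f_1\equiv0$. Because $V(z)=\sum_i z_i\Gamma_i$ is linear, writing $\x=(\x u)v+\y$ with $\y=\x(\mathrm{I}-uv)$ gives
\[
\sigma^2(\x)=(\x u)\,u^\transpose V(v)u+u^\transpose V(\y)u+\tau^2,
\]
so one takes $d_1=u^\transpose V(v)u$, $h_2(\y)=u^\transpose V(\y)u$ — linear in $\y$, hence $|h_2(\y)|\le d_2\|\y\|$ — and $f_2\equiv\tau^2$, which is $o(\x u)$ as $\|\x\|\to\infty$ (on $\R_+^d$ one has $\x u\to\infty$ since $u>0$). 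Here $c_1=au>0$ and $d_1=u^\transpose V(v)u>0$ in the non-degenerate case, so (A1) holds with $\alpha=0$. Assumptions (A4) and (A5) are then immediate: $g(\x)u\equiv au>0$ is bounded away from $0$, and $\sigma^2$ is a polynomial in $\x$, hence bounded on bounded sets. Finally (A3) holds because $(Z_n)$ is an irreducible Markov chain on an infinite subset of $\N^d$, so its support is unbounded along $u$.

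The substantive step is (A2). Conditionally on $Z_n=\x$, the increment $\xi_n$ is a sum of $\sum_i\x_i+1$ independent centered $\R^d$-valued terms — the recentred offspring vectors of the $\sum_i\x_i$ individuals and the recentred immigration vector — each with a $(2+\delta)$-th moment bounded by a constant thanks to \eqref{hypo1}. Applying a Rosenthal-type inequality coordinatewise I would obtain
\[
\esp\!\left(\|\xi_n\|^{2+\delta}\mid\F\right)\le C\,(\x u+1)^{(2+\delta)/2},
\]
whereas $\sigma^2(\x)=\sum_i\x_i\,u^\transpose\Gamma_iu+\tau^2\ge c\,(\x u+1)$ — the constant from $\tau^2>0$ and the linear growth from non-degeneracy of the offspring laws — so combining yields $\esp(\|\xi_n\|^{2+\delta}\mid\F)\le A_1\sigma^{2+\delta}(Z_n)$, which is (A2). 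Now Theorem 1 applies: since $\alpha=0$, $c_1=au$ and $d_1=u^\transpose V(v)u$, we have $c_1<\frac{d_1}{2}\Longleftrightarrow 2au<u^\transpose V(v)u$ and $c_1>\frac{d_1}{2}\Longleftrightarrow 2au>u^\transpose V(v)u$, so part i) gives $\pr(\|Z_n\|\to\infty)=0$ in the first case and part ii) gives $\pr(\|Z_n\|\to\infty)>0$ in the second. Because $(Z_n)$ is irreducible on a countable set whose bounded subsets are finite, $\{\|Z_n\|\to\infty\}$ has probability $0$ exactly when the chain is recurrent and positive probability (in fact $1$) exactly when it is transient, as recalled at the start of this section; this gives the two assertions.

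The hard part will be (A2): dominating the $(2+\delta)$-th conditional moment of a random sum of independent vectors by $\sigma^{2+\delta}(Z_n)$. Beyond the choice of the right concentration inequality, the subtle point is the lower bound $\sigma^2(\x)\gtrsim\x u$ for $\x$ large, which is exactly the requirement that each offspring law be genuinely non-degenerate along $u$; the remaining verifications and the passage from Theorem 1 to the recurrence/transience dichotomy are routine.
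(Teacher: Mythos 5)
Your proposal is correct and follows essentially the same route as the paper: the same identifications in (A1) ($\alpha=0$, $c_1=au$, $d_1=u^\transpose V(v)u$, $h_2(\y)=u^\transpose V(\y)u$, $f_2=\tau^2$), the same routine verification of (A3)--(A5), and the same conclusion via Theorem 1 plus irreducibility of the chain. The only inessential difference is in checking (A2), where you use a Rosenthal-type inequality instead of the paper's Marcinkiewicz--Zygmund plus H\"older argument (both yield the bound of order $(\x u)^{(2+\delta)/2}$), and your explicit lower bound $\sigma^2(\x)\geq c(\x u+1)$ is precisely what the paper uses implicitly when it writes that the comparison holds for $\|Z_n\|$ sufficiently large.
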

\begin{remark}
Kawazu \cite{kawazu} obtained the same criterion under weaker assumptions: he did not require $\var(A_1u)<+\infty$ and \eqref{hypo1}. He also proved that the process is null recurrent when $2au=u^\transpose V(v)u$ if $\esp\left(X_{i,j}^2 \log \left(X_{i,j}\right)\right)<+\infty$ and \mbox{$\esp\left(A_1 \log \left(A_1\right)\right)<+\infty$}.
\end{remark}
\begin{proof}
Firstly, note that
\[
 \esp((\xi_nu)^2 \big|\F)=u^\transpose V(Z_n)u+\tau^2=(Z_nu)u^\transpose V(v)u+u^\transpose V(Z_n(I-uv))u+\tau^2,
\]
then recurrence and transience depend on the sign of $2au-u^\transpose V(v)u$.\\
Since \eqref{ineq} is verified with $\alpha=0$, $c_1=au$, $h_1=0$, $d_1=u^\transpose V(v)u$, $h_2(\y)=u^\transpose V(\y)u$, $f_1=0$ and $f_2=\tau^2$, and \eqref{dichotomy}, \eqref{gnonzero} and \eqref{sigmafinite} are also verified, we just have to check \eqref{moment2+} to apply Theorem 1.

Let $l \in \N^*$ and $\left(U_k\right)_{k \in \{1, \ldots, l\}}$ be some random variables independent with zero mean and such that $\esp\left(|U_k|^{2+\delta}\right)<+\infty$ for all $k \in \{1, \ldots, l\}$.
We can apply both Marcinkiewicz-Zygmund \cite[p. 108]{LinBai} and H\"older inequalities, \textit{i.e.} there exists $R>0$ such that
\[
 \esp\left(\left(\sum_{k=1}^{l}U_k\right)^{2+\delta}\right)  \leq R ~\esp\left(\left(\sum_{k=1}^{l}U_k^2\right)^{1+\frac{\delta}{2}}\right) \leq R l^{\frac{\delta}{2}}\esp\left(\sum_{k=1}^{l}U_k^{2+\delta}\right).
\]
Since there are three sums in $\|\xi_n\|$, we now apply three times the latter inequality to verify that \eqref{moment2+} holds:
\begin{align*}
 & \esp(\|\xi_n\|^{2+\delta} \big|\F)\\
& \leq 2^{2+\delta} \esp\left(\left(\sum_{j=1}^d\left(  \sum_{i=1}^{d}\sum_{k=1}^{\left(Z_n\right)_i}\left\{X_{i,j,k,n}-\esp\left(X_{i,j,k,n}\right)\right\}\right)^2\right)^{1+\frac{\delta}{2}} \big| \F \right)\\
& \quad + 2^{2+\delta} \esp\left(\|A_n-a\|^{2+\delta} \big| \F \right)\\
& \leq  2^{2+\delta}d^{\frac{\delta}{2}} \esp\left(\left(\sum_{j=1}^d\Big|\sum_{i=1}^{d}\sum_{k=1}^{\left(Z_n\right)_i}\left\{X_{i,j,k,n}-\esp\left(X_{i,j,k,n}\right)\right\}\Big|^{2+\delta}\right)\big| \F \right)\\
& \quad + 2^{2+\delta} \esp\left(\|A_n-a\|^{2+\delta} \big| \F \right)\\
& \leq R d^{\delta} \esp\left(\sum_{j=1}^d\sum_{i=1}^{d}\Big|\sum_{k=1}^{\left(Z_n\right)_i}\left\{X_{i,j,k,n}-\esp\left(X_{i,j,k,n}\right)\right\}\Big|^{2+\delta} \big| \F\right)\\
& \quad +2^{2+\delta} \esp\left(\|A_n-a\|^{2+\delta} \big| \F \right)\\
& \leq R^2 d^{\delta} \esp\left(\sum_{j=1}^d\sum_{i=1}^{d}\left(Z_n\right)_i^{\frac{\delta}{2}}\sum_{k=1}^{\left(Z_n\right)_i}|X_{i,j,k,n}-\esp\left(X_{i,j,k,n}\right)|^{2+\delta}\big| \F \right)\\
& \quad+2^{2+\delta} \esp\left(\|A_n-a\|^{2+\delta} \big| \F \right).
\end{align*}
We now apply \eqref{hypo1} to obtain
\[
\esp(\|\xi_n\|^{2+\delta} \big|\F) \leq C \left(\sum_{i=1}^{d}\left(Z_n\right)_i^{\frac{2+\delta}{2}}\right)+D \leq C' \sigma^{2+\delta}(Z_n),
\]
for $\|Z_n\|$ sufficiently large.
\end{proof}

\subsection{State-dependent multitype Galton-Watson processes}
State-dependent Galton-Watson processes were first introduced by Klebaner in \cite{klebaner84} and H\"opfner in \cite{hopf}. H\"opfner compared the probability generating functions of these processes with those of critical Galton-Watson processes with immigration to obtain a criterion of extinction. However, this idea seems difficult to be transfered to the multitype case.
Basically this is because we have to alter the transitions of the Galton-Watson with immigration process for an infinite number of states and thus we may change the nature of the process (recurrent or transient). 
Klebaner \cite{klebaner89,klebaner91} defined multitype state-dependent Galton-Watson processes for which he only gave sufficient conditions for extinction. In particular, he could not treat some range of parameter. In this subsection, we obtain a criterion to infer whether there is almost-sure extinction or survival with positive probability (except in a very special case).

Following \cite{klebaner91}, we define a discrete-time state-dependent multitype Galton-Watson process with $d$ types $Z_n$ by 
\[
 Z_{n+1}=\left(\sum_{i=1}^{d}\sum_{k=1}^{\left(Z_n\right)_i}X_{i,j,k,n}\left(Z_n\right)\right)_{j \in \{1, \ldots, d\}}
\]
where $X_{i,j,k,n}\left(z\right)$ is the number of type $j$ offspring of the $k$th type $i$ parent when the process is in the state $z$ in time $n$.
Given $Z_n=z$, the $k$th parent of type $i$ has a random vector of offspring 
\[
\left(X_{i,1,k,n}\left(z\right), \ldots, X_{i,d,k,n}\left(z\right)\right),\,k=1, \ldots, z_i.
\]
For each $n \in \N$, the offspring vectors of distinct parents $\left(k=1, \ldots, z_i, i=1, \ldots, d\right)$ are independent. Moreover, for a fixed parental type $i$, the offspring vectors are identically distributed for all $n$ and $k$, with distribution depending at most on the state $z$. 
For the sake of notation clarity, we write $X_{i,j}$ for $X_{i,j,1,1}$.
Let 
\[
M(z)=\left(\esp \left(X_{i,j}\left(z\right)\right)\right)_{i,j \in \{1, \ldots, d\}}
\]
 be the mean matrix.

We assume that
\[
M(z)=M+C(z)
\]
where $M$ is a non-negative primitive matrix with Perron root $1$ and corresponding right and left eigenvectors $u$ and $v$, with $vu=u^\transpose u=1$, and $C(z)$ is a non-negative matrix and we let 
\[
g(z)=zC(z).
\]
We assume that 
\[
 \lim_{\|z\|\rightarrow +\infty}g(z)=D \in \R_+^d.
\]
Let $\Gamma_i(z)=\left(\text{Cov}\left(X_{i,j}\left(z\right),X_{i,j'}\left(z\right)\right)\right)_{j,j' \in \{1, \ldots, d\}}$ be the matrix of the covariances of offspring distributions when the population size is in the state $z$. We assume that for all $i \in \{1, \ldots, d\}$, $\Gamma_i(z)$ converges to $\Gamma_i$ when $\|z\|$ converges to infinity.

Let 
\[
\tilde{V}(z)=\sum_{i=1}^{d}z_i\Gamma_i\left(z\right) 
\]
 be the conditional dispersion matrix of the next generation when the population is in the state $z$. We introduce also the quantity 
\[
V(z)=\sum_{i=1}^{d}z_i\Gamma_i. 
\]
Then $(Z_n)$ satisfies the stochastic difference equation 
\[
 Z_{n+1}=Z_nM+g\left(Z_n\right)+\xi_n,
\]
with 
\[
\xi_n=\left(\sum_{i=1}^d\sum_{k=1}^{\left(Z_n\right)_i}\{X_{i,j,k,n}\left(Z_n\right)-\esp\left(X_{i,j,k,n}\left(Z_n\right)\right)\}\right)_{j \in \{1, \ldots, d\}}.
\]
One can easily check that $\esp\left(\xi_nu \big| \F \right)=0$ and 
\[
\esp\left(\left(\xi_nu\right)^2 \big| \F \right)=u^\transpose \tilde{V}(Z_n)u= (Z_nu)u^\transpose V(v)u+u^\transpose V(Z_n(\mathrm{I}-uv))u+f(Z_n),
\]
where the function $f$ is such that $f(x)=o(\|x\|)$ when $\|x\|$ tends to infinity.\\
We assume that there exist $\delta>0$ and $K>0$ such that for all $i$, $j \in \{1, \ldots, d\}$ and $z \in \R^d_+$, 
\[
\esp\left(X_{i,j}\left(z\right)^{2+\delta}\right)<K.
\]
As in the previous example, the assumption \eqref{moment2+} is a consequence of Marcinkiewicz-Zygmund and H\"older inequalities.

We make the usual assumptions when one has in mind a population process: $0$ is an absorbing state and all states in $\N^d \setminus \{0\}$ communicate.
\begin{thm}
\leavevmode\\
If 
\[
\frac{2Du}{u^\transpose V(v)u}<1
\]
then the process becomes extinct almost surely.\\
If 
\[
 \frac{2Du}{u^\transpose V(v)u}>1
\]
then the process survives with positive probability.
\end{thm}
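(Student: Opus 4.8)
The plan is to exhibit $(Z_n)$ as an instance of the recursion~\eqref{rec} with $\alpha=0$, apply Theorem~1, and then upgrade ``stays bounded versus escapes to $+\infty$'' into ``becomes extinct versus survives'' using the communication structure. Throughout we may assume $Z_0\neq 0$, the case $Z_0=0$ being trivial.

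First I would match the hypotheses of Theorem~1. Since $g(z)\to D$, hence $g(z)u\to Du$, as $\|z\|\to\infty$, write $g(z)u=Du+f_1(z)$, so that $c_1=Du$, $h_1\equiv 0$ and $f_1(z)=g(z)u-Du$; because $u$ has strictly positive entries and $z\in\R_+^d$, the quantities $zu$ and $\|z\|$ are comparable, so $\|z\|\to\infty$ is equivalent to $zu\to\infty$ and $f_1(z)=o(1)=o\bigl((zu)^0\bigr)$. From the displayed identity $\esp\bigl((\xi_nu)^2\mid\F\bigr)=(Z_nu)\,u^\transpose V(v)u+u^\transpose V\bigl(Z_n(\mathrm{I}-uv)\bigr)u+f(Z_n)$, read off $d_1=u^\transpose V(v)u$, $h_2(y)=u^\transpose V(y)u$ and $f_2=f$; as $z\mapsto V(z)$ is linear, $|h_2(y)|\le\|u\|^2\|V(y)\|\le d_2\|y\|$, while $f(z)=o(\|z\|)=o(zu)$ by hypothesis. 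This gives~\eqref{ineq}. Assumption~\eqref{moment2+} follows exactly as in the Galton-Watson with immigration example, combining the Marcinkiewicz-Zygmund and H\"older inequalities with the uniform bound $\esp\bigl(X_{i,j}(z)^{2+\delta}\bigr)<K$; the only difference is that $\|\xi_n\|$ now involves a single sum over offspring, and one gets $\esp\bigl(\|\xi_n\|^{2+\delta}\mid\F\bigr)\le C\sum_i (Z_n)_i^{(2+\delta)/2}+C'\le C''\sigma^{2+\delta}(Z_n)$ for $\|Z_n\|$ large.

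If $2Du<u^\transpose V(v)u$, i.e.\ $c_1<d_1/2$, then Theorem~1(i) gives $\pr(\|Z_n\|\to\infty)=0$, and it remains to turn this into almost sure extinction --- the step I expect to be the main obstacle, since one must rule out an oscillatory regime in which $\|Z_n\|$ revisits bounded sets infinitely often without ever reaching $0$. Here I would use that $0$ is absorbing and that $\N^d\setminus\{0\}$ is a single communicating class from which $0$ is accessible: on $\{\|Z_n\|\not\to\infty\}$ the chain visits some finite ball $B_m=\{z\in\N^d:\|z\|\le m\}$ infinitely often, and since $B_m$ is finite there are $p_m>0$ and $\ell_m\in\N$ with $\pr\bigl(\exists\, j\le\ell_m,\ Z_{n+j}=0\mid Z_n=z\bigr)\ge p_m$ for all $z\in B_m$; a conditional Borel-Cantelli argument then forces absorption at $0$ almost surely on $\{Z_n\in B_m\ \text{i.o.}\}$. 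Letting $m\to\infty$, $\pr\bigl(\{\text{extinct}\}\cup\{\|Z_n\|\to\infty\}\bigr)=1$, so the process becomes extinct almost surely.

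If $2Du>u^\transpose V(v)u$, i.e.\ $c_1>d_1/2$ (so $Du>0$), I would check the remaining hypotheses. For~\eqref{sigmafinite}: the uniform moment bound gives $\var\bigl(X_{i,j}(z)\bigr)\le K^{2/(2+\delta)}$ for every $z$, so each $\Gamma_i(z)$ has uniformly bounded entries and $\sigma^2(z)=\sum_i (z)_i\,u^\transpose\Gamma_i(z)u$ is bounded on $\{\|z\|<a\}$. For~\eqref{gnonzero}: since $g(z)u\to Du>0$ there is $R_0$ with $g(z)u>Du/2$ for $\|z\|>R_0$, and as $zu$ and $\|z\|$ are comparable on $\R_+^d$, taking $s_1$ large makes $zu>s_1\Rightarrow\|z\|>R_0$, so~\eqref{gnonzero} holds with $\varepsilon=Du/2$. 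For~\eqref{dichotomy}: as $\N^d\setminus\{0\}$ is one communicating class and $Z_0\neq0$, every state of $\N^d\setminus\{0\}$ --- in particular states with $zu$ arbitrarily large --- is reached with positive probability at some time. Then Theorem~1(ii) gives $\pr(\|Z_n\|\to\infty)>0$, and since $\{\|Z_n\|\to\infty\}$ entails non-extinction, the process survives with positive probability. Apart from this Markov-chain step, the proof is pure bookkeeping: reading off the constants in~\eqref{ineq} and transcribing the~\eqref{moment2+} estimate from the immigration example.
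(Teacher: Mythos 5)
Your proposal is correct and takes essentially the same route as the paper's (largely implicit) proof: read off (A1) with $\alpha=0$, $c_1=Du$, $d_1=u^\transpose V(v)u$, $h_2(\y)=u^\transpose V(\y)u$, obtain (A2) from the Marcinkiewicz--Zygmund and H\"older inequalities exactly as in the immigration example, apply Theorem 1, and then use that $0$ is absorbing and accessible while $\N^d\setminus\{0\}$ is a single communicating class to upgrade ``no escape to infinity'' into almost-sure extinction and ``escape with positive probability'' into survival. The finite-ball/conditional Borel--Cantelli step and the explicit checks of (A3)--(A5) are precisely the details the paper leaves to the reader, so no further comment is needed.
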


We cannot treat the case $\frac{2Du}{u^\transpose V(v)u}=1$.

We now illustrate this result by the following example.
\medskip

\noindent\textbf{Example.}
We take the example of a two-type cell division process from \cite{klebaner91}. We recall that $X_{i,j,k,n}(z)$ is the number of children of type $j$ for the $k$-th parent of type $i$ at generation $n$ when the population is at state $z$. Again, we write $X_{i,j}(z)$ for $X_{i,j,1,1}(z)$. 

We assume that $X_{i,j}(z)$ take values $0$ or $1$ with probabilities $p_{i,j}(z)$ and that $\pr(X_{i,1}(z)=0, X_{i,2}(z)=0)>0$, $i \in \{1,2\}$. Let $b_i(z)=\pr(X_{i,1}(z)=1, X_{i,2}(z)=1)$, $i \in \{1,2\}$ and $a_{i,j}(x)$, $i,j \in \{1,2\}$, be arbitrary functions non-vanishing for $x>0$, such that 
\[
 M(z)=\begin{pmatrix}p & 1-p\\ p' & 1-p'\end{pmatrix}+\begin{bmatrix}\dfrac{\strut c_1a_{1,1}(z_1)}{\strut z_1a_{1,1}(z_1)+z_2a_{2,1}(z_2)} & \dfrac{\strut c_2a_{1,2}(z_1)}{\strut z_1a_{1,2}(z_1)+z_2a_{2,2}(z_2)} \\ \dfrac{\strut c_1a_{2,1}(z_2)}{\strut z_1a_{1,1}(z_1)+z_2a_{2,1}(z_2)} & \dfrac{\strut c_2a_{2,2}(z_2)}{\strut z_1a_{1,2}(z_1)+z_2a_{2,2}(z_2)} \end{bmatrix}
\]
where $p,p' \in \left(0,1\right)$ and $c_1,c_2>0$. We assume that $b_i(z)\sim b_i$ when $\|z\|$ tends to infinity.\\
With the previous notations, we have 
\begin{itemize}
\item $u=\frac{1}{\sqrt{2}}\begin{pmatrix} 1 \\ 1 \end{pmatrix}$ and $v=\sqrt{2}\begin{pmatrix} \frac{p'}{1-p+p'} & \frac{1-p}{1-p+p'}\end{pmatrix}$
\item $Du=\frac{c_1+c_2}{\sqrt{2}}$
\item $V(z)=z_1 \begin{pmatrix} p(1-p) & b_1-p(1-p) \\ b_1-p(1-p) & p(1-p)\end{pmatrix} + z_2 \begin{pmatrix} p'(1-p') & b_2-p'(1-p') \\ b_2-p'(1-p') & p'(1-p')\end{pmatrix}$
\end{itemize}
\begin{corollary}
If
\[
c_1+c_2<\frac{p'}{1-p+p'}\, b_1+\frac{1-p}{1-p+p'}\, b_2
\]
then the process becomes extinct almost-surely.\\
If
\[
c_1+c_2>\frac{p'}{1-p+p'}\, b_1+\frac{1-p}{1-p+p'}\, b_2
\]
then the process survives with positive probability.
\end{corollary}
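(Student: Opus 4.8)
The plan is to read off the corollary as the special case of the extinction/survival theorem of this subsection corresponding to this two-type model. The list displayed just before the corollary already records $u=\tfrac{1}{\sqrt2}(1,1)^\transpose$, $v=\sqrt2\big(\tfrac{p'}{1-p+p'},\tfrac{1-p}{1-p+p'}\big)$, $Du=\tfrac{c_1+c_2}{\sqrt2}$ and the matrix $V(z)$, and it is routine to confirm that the model falls inside the state-dependent framework: the offspring $X_{i,j}(z)$ are Bernoulli, so every uniform moment bound holds and \eqref{moment2+} follows from the Marcinkiewicz--Zygmund/H\"older step used above; $M=\left(\begin{smallmatrix}p&1-p\\p'&1-p'\end{smallmatrix}\right)$ is strictly positive, hence primitive, and stochastic, so its Perron root is $1$ with the stated normalized eigenvectors; the map $g(z)=zC(z)$ equals $(c_1,c_2)$ identically on $\R_+^d\setminus\{0\}$ because left-multiplying $C(z)$ by $z$ cancels each denominator against its numerator, so $g$ converges with $D=(c_1,c_2)$; and $\esp(X_{i,j}(z))\to p$ or $1-p$ (resp. $p'$ or $1-p'$) together with $b_i(z)\to b_i$ forces $\Gamma_i(z)\to\Gamma_i$ with the displayed $\Gamma_i$. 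The absorbing-state and communication hypotheses are built into the model. Hence the only quantity still to compute is $u^\transpose V(v)u$.

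For that, note that for $u=\tfrac{1}{\sqrt2}(1,1)^\transpose$ the quadratic form against $\Gamma_1=\left(\begin{smallmatrix}p(1-p)&b_1-p(1-p)\\b_1-p(1-p)&p(1-p)\end{smallmatrix}\right)$ collapses to
\[
u^\transpose\Gamma_1 u=\tfrac12\big(p(1-p)+(b_1-p(1-p))+(b_1-p(1-p))+p(1-p)\big)=b_1,
\]
and likewise $u^\transpose\Gamma_2 u=b_2$. Since $V(v)=v_1\Gamma_1+v_2\Gamma_2$ with $v_1=\sqrt2\,\tfrac{p'}{1-p+p'}$ and $v_2=\sqrt2\,\tfrac{1-p}{1-p+p'}$, linearity gives $u^\transpose V(v)u=\sqrt2\big(\tfrac{p'}{1-p+p'}b_1+\tfrac{1-p}{1-p+p'}b_2\big)$, and therefore
\[
\frac{2Du}{u^\transpose V(v)u}=\frac{\sqrt2(c_1+c_2)}{\sqrt2\big(\tfrac{p'}{1-p+p'}b_1+\tfrac{1-p}{1-p+p'}b_2\big)}=\frac{c_1+c_2}{\tfrac{p'}{1-p+p'}b_1+\tfrac{1-p}{1-p+p'}b_2}.
\]
Thus the hypothesis $2Du/(u^\transpose V(v)u)<1$ of the theorem is exactly $c_1+c_2<\tfrac{p'}{1-p+p'}b_1+\tfrac{1-p}{1-p+p'}b_2$, which yields almost-sure extinction, and the reverse strict inequality yields survival with positive probability; this is precisely the corollary.

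There is no genuine analytic difficulty: the argument is entirely bookkeeping once the theorem is available. The two points that require a little care are the identity $g(z)\equiv(c_1,c_2)$ --- the cancellation of the denominators of $C(z)$ after left multiplication by $z$ must be verified, including on the coordinate axes where one of $z_1,z_2$ vanishes --- and the collapse $u^\transpose\Gamma_i u=b_i$, which relies on the particular placement of the off-diagonal entries of $\Gamma_i$. Everything else is a direct substitution into the preceding theorem.
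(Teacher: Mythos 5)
Your proof is correct and follows exactly the route the paper intends: the corollary is just Theorem 3.2 applied to the listed data, and your computations ($g(z)\equiv(c_1,c_2)$ by cancellation, $u^\transpose\Gamma_i u=b_i$, hence $2Du/(u^\transpose V(v)u)=(c_1+c_2)/\bigl(\tfrac{p'}{1-p+p'}b_1+\tfrac{1-p}{1-p+p'}b_2\bigr)$) match the quantities the paper records just before the corollary. No gaps.
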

Klebaner in \cite{klebaner91} proved almost sure extinction if $c_1+c_2 < \min (b_1,b_2)$ and survival with positive probability if $c_1+c_2> \max(b_1,b_2)$. We thus have improved his result since we prove that the critical value for $c_1+c_2$ is $\frac{p'}{1-p+p'}b_1+\frac{1-p}{1-p+p'}b_2$. Except for the equality case, we get a complete picture of the fate of the process.

\section{Proof of Lemmas \protect{\ref{loglyapu}} and \protect{\ref{1/loglyapu}}}\label{section:lemma-proof}

In this section, we prove Lemmas \ref{loglyapu} and \ref{1/loglyapu}. The proof is based upon the following result.
Let
\[
\Y_n=\X_n-\left(\X_nu\right)v
\]
be the population vector minus the contribution along the eigenvector $v$.
For later convenience, we set $\Delta_{n,k}=\X_{n+k}u-\X_nu$.

\begin{lemma}\label{lemmeineq}
Let us assume (A1) and (A2). There exist $c_2' \geq 0$ and $d_2'>0$ such that for all integers $n,k \geq 1$ and for all $\varepsilon>0$, 
\begin{align}
\label{moment1}
& \left|\esp\left(\Delta_{n,k} \big|\F\right) - c_1k\left(\X_nu\right)^{\alpha}\right|
 \leq c_2' \| \Y_n \|^{\alpha}+o\left(\left(\X_nu\right)^\alpha\right),\\
\label{moment2}
& \left|\esp\left(\Delta_{n,k}^2 \big|\F\right) - kd_1\left(\X_nu\right)^{1+\alpha} \right|
 \leq d_2'\|\Y_n\|^{1+\alpha}+o\left(\left(\X_nu\right)^{1+\alpha}\right),\\
\label{moment2bis}
& \, \esp\left(|\Delta_{n,k}|^2\mathds{1}_{\left\{\Delta_{n,k} \geq \varepsilon \X_nu\right\}} \big|\F\right)
= \mathcal{O}\left( \left(\X_nu\right)^{1+\alpha+\frac{\alpha-1}{2}\delta}\right),
\end{align}
with $c_2'=0$ if $\alpha\leq 0$.
\end{lemma}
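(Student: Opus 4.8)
The starting point is the pair of Perron--Frobenius identities $Mu=u$ and $vM=v$, with $vu=1$. Applying the first to \eqref{rec} yields the scalar recursion $\X_{n+1}u=\X_nu+g(\X_n)u+\xi_nu$, and hence the telescoping identity
\[
\Delta_{n,k}=\sum_{j=0}^{k-1}\big(g(\X_{n+j})u+\xi_{n+j}u\big).
\]
Projecting \eqref{rec} onto the complementary subspace $E=\{w\in\R^d:\ wu=0\}$, which is invariant under $w\mapsto wM$ and on which this map has spectral radius strictly less than $1$, gives $\Y_{n+1}=\Y_nM+(g(\X_n)+\xi_n)(\mathrm I-uv)$, so that iterating $k$ (fixed) times one obtains $\|\Y_{n+j}\|\le C\|\Y_n\|+C\sum_{i<j}\big(\|g(\X_{n+i})\|+\|\xi_{n+i}\|\big)$ for $0\le j\le k$. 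The two quantitative facts extracted from (A1)--(A2) that are used throughout are: the drift increment is genuinely sublinear, $|g(\X_m)u|=O\big((\X_mu)^{\alpha}+\|\Y_m\|^{\alpha}\big)$; and the martingale increment is small, $\esp\big(|\xi_mu|^{2+\delta}\big|\mathcal F_m\big)\le \esp\big(\|\xi_m\|^{2+\delta}\big|\mathcal F_m\big)\le A_1\sigma^{2+\delta}(\X_m)=O\big((\X_mu)^{(1+\alpha)(2+\delta)/2}+\|\Y_m\|^{(1+\alpha)(2+\delta)/2}\big)$. Since $-1<\alpha<1$, every exponent produced by a power of $g$ or by the noise is strictly below the corresponding ``main'' exponent $\alpha$ resp. $1+\alpha$.

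I would then establish a block of auxiliary conditional moment estimates by a joint finite induction on $j\in\{0,\dots,k\}$: for $1\le p\le 2+\delta$,
\[
\esp\big((\X_{n+j}u)^{p}\big|\F\big)\le C\big((\X_nu)^{p}+\|\Y_n\|^{p}+1\big),\qquad \esp\big(\|\Y_{n+j}\|^{1+\alpha}\big|\F\big)\le C\|\Y_n\|^{1+\alpha}+o\big((\X_nu)^{1+\alpha}\big)+C,
\]
and, as a crude version of \eqref{moment2}, $\esp\big(\Delta_{n,j}^{2}\big|\F\big)=O\big((\X_nu)^{1+\alpha}+\|\Y_n\|^{1+\alpha}\big)$. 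Each induction step uses only the one-step recursions above, the $c_r$-inequality, the martingale property $\esp(\xi_mu\big|\mathcal F_m)=0$, and Hölder to bound higher cross moments of $\xi_m$ by its $(2+\delta)$-moment. Combining the crude bound with Chebyshev's inequality shows that $\Delta_{n,j}$ is small relative to $\X_nu$ with overwhelming conditional probability, $\pr\big(|\Delta_{n,j}|\ge\tfrac12\X_nu\big|\F\big)=O\big((\X_nu)^{\alpha-1}\big)$.

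The three displays then follow by plugging (A1) into the telescoping identity. For \eqref{moment1}, linearity gives $\esp(\Delta_{n,k}\big|\F)=\sum_j\big[c_1\esp((\X_{n+j}u)^{\alpha}\big|\F)+\esp(h_1(\Y_{n+j})\big|\F)+\esp(f_1(\X_{n+j})\big|\F)\big]$; splitting each expectation according to whether $|\Delta_{n,j}|\le\tfrac12\X_nu$ and using the Taylor bound $(1+t)^{\alpha}=1+O(|t|)$ on the ``good'' event together with Hölder and the probability bound above on its complement yields $\esp((\X_{n+j}u)^{\alpha}\big|\F)=(\X_nu)^{\alpha}(1+o(1))$; the $h_1$-terms are $\le c_2\,\esp(\|\Y_{n+j}\|^{\alpha}\big|\F)\le c_2'\|\Y_n\|^{\alpha}+o((\X_nu)^{\alpha})$ and vanish for $\alpha\le 0$ (so $c_2'=0$ there); the $f_1$-terms are controlled by a further split according to whether $\|\X_{n+j}\|$ is large, using $f_1(\x)=o((\x u)^{\alpha})$ on one part and the probability bound on the other. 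For \eqref{moment2} I expand $\Delta_{n,k}^{2}$: after conditioning on the appropriate $\mathcal F_{n+j}$ all products $\xi_{n+i}u\cdot\xi_{n+j}u$ with $i\neq j$ and $g(\X_{n+i})u\cdot\xi_{n+j}u$ with $j\ge i$ vanish, the products $g(\X_{n+i})u\cdot g(\X_{n+j})u$ are $O((\X_nu)^{2\alpha})$ and the remaining backward cross terms are $O\big((\X_nu)^{\alpha+(1+\alpha)/2}\big)$ by Cauchy--Schwarz, all $o((\X_nu)^{1+\alpha})$; what survives is $\sum_j\esp(\sigma^2(\X_{n+j})\big|\F)$, treated term by term exactly as in \eqref{moment1} via (A1), giving $kd_1(\X_nu)^{1+\alpha}+d_2'\|\Y_n\|^{1+\alpha}+o((\X_nu)^{1+\alpha})$. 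Finally \eqref{moment2bis} comes from $x^{2}\mathds{1}_{\{x\ge a\}}\le a^{-\delta}x^{2+\delta}$ with $a=\varepsilon\X_nu$ together with $\esp\big(|\Delta_{n,k}|^{2+\delta}\big|\F\big)=O\big((\X_nu)^{(1+\alpha)(2+\delta)/2}\big)$ from the $c_r$-inequality and the $(2+\delta)$-moment estimates, since $(\X_nu)^{-\delta}(\X_nu)^{(1+\alpha)(2+\delta)/2}=(\X_nu)^{1+\alpha+\frac{\alpha-1}{2}\delta}$.

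The main obstacle, and where essentially all the work lies, is making the ``$o(\cdot)$'' errors uniform: because $f_1$, $f_2$ and $g$ are only controlled for large arguments, one must show that the conditional contribution of the rare events on which $\X_{n+j}u$ has dropped far below $\X_nu$ is negligible, and this is precisely what the $(2+\delta)$-moment assumption (A2) provides — without it one cannot beat the thresholds $(\X_nu)^{\alpha}$ and $(\X_nu)^{1+\alpha}$. A secondary technical point is that $M$ need not be normal, so the contraction on $E$ carries polynomial-in-$j$ prefactors; since $k$ is fixed this is harmless, but it has to be tracked through the induction of the second paragraph.
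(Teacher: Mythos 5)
Your outline follows the same route as the paper (telescoping $\Delta_{n,k}=\sum_{i<k}(g(\X_{n+i})u+\xi_{n+i}u)$, contraction of $\Y_n$ on the complement of $v$, conditional moment estimates for $(\X_{n+j}u)^\gamma$ and $\|\Y_{n+j}\|^\gamma$, vanishing of the martingale cross terms, and Markov's inequality for \eqref{moment2bis}), but there is one genuine gap: the uniformity of $c_2'$ and $d_2'$ in $k$. The lemma asserts ``there exist $c_2'\geq 0$ and $d_2'>0$ such that for all $n,k\geq 1$\dots'', and this uniformity is not decorative: in the proofs of Lemmas \ref{loglyapu} and \ref{1/loglyapu} one first fixes $c_2',d_2'$ and then chooses $k$ so large that $k\left(c_1-\frac{d_1}{2(1+\varepsilon)}\right)+c_2'b^\alpha+\frac{d_2'b^{1+\alpha}}{2(1+\varepsilon)}<0$; if $c_2'$ and $d_2'$ were allowed to grow with $k$ this choice would in general be impossible. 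In your argument you iterate the recursion for $\Y$ into $\|\Y_{n+j}\|\leq C\|\Y_n\|+C\sum_{i<j}\left(\|g(\X_{n+i})\|+\|\xi_{n+i}\|\right)$, discard the decay in $j$ on the grounds that ``$k$ is fixed, so this is harmless'', and then sum over $j\in\{0,\dots,k-1\}$ in \eqref{moment1} and \eqref{moment2}. Summing $k$ copies of $C\|\Y_n\|^{\alpha}$ (resp.\ $C\|\Y_n\|^{1+\alpha}$) produces coefficients proportional to $k$, i.e.\ you prove the statement with $c_2'(k)$, $d_2'(k)$ growing linearly in $k$, which is strictly weaker than the lemma and insufficient for its intended use. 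The paper flags exactly this as ``the main point''.

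The fix is already implicit in your own set-up: since $w\mapsto wM$ has spectral radius $\lambda<1$ on $\{w:wu=0\}$, Gelfand's formula gives $\|\Y_n(M-uv)^j\|\leq C\lambda_1^{\,j}\|\Y_n\|$ with $\lambda_1\in(\lambda,1)$ and $C$ independent of $j$ (the non-normality prefactors are absorbed into $\lambda_1$, not into a $k$-dependent constant). Keeping this factor, the sum over $j<k$ of the $\|\Y_n\|$-terms is a geometric series bounded by $\frac{C}{1-\lambda_1^{\gamma}}\|\Y_n\|^{\gamma}$ uniformly in $k$, while the $g$- and $\xi$-contributions, which are $o\left(\left(\X_nu\right)^{\gamma}\right)$, may carry $k$-dependent constants without harm; this is precisely the paper's Lemma \ref{Y}. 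With that repair the rest of your argument goes through: your good/bad-event derivation of $\esp\left(\left(\X_{n+j}u\right)^{\gamma}\big|\F\right)=\left(\X_nu\right)^{\gamma}(1+o(1))$ is a legitimate variant of the paper's Lemma \ref{X} (which uses elementary convexity inequalities instead), though you should also record the $\|\Y_{n+j}\|^{\alpha}$-estimate (not only the exponent $1+\alpha$) for the $h_1$-terms when $\alpha>0$, and note that the case $\alpha<0$ uses the standing lower bound on $\X_nu$ to control $(\X_{n+j}u)^{\alpha}$ on the bad event, as the paper does implicitly.
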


The proof of this lemma is based upon two technical lemmas that we
state and prove first.

\begin{lemma}\label{X}
Let us assume (A1) and (A2). For all $\alpha \in \left]-1,1\right[$ and $n,i \in \N$, 
\[
\esp \left(\left(\X_{n+i}u\right)^{\alpha} \big| \F \right)=\left(\X_{n}u\right)^{\alpha}+o\left(\left(\X_{n}u\right)^{\alpha}\right),
\]
and 
\[
 \esp \left(\left(\X_{n+i}u\right)^{1+\alpha} \big| \F \right)=\left(\X_{n}u\right)^{1+\alpha}+o\left(\left(\X_{n}u\right)^{1+\alpha}\right).
\]

\end{lemma}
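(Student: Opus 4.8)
The plan is to prove the two estimates together by writing $\X_{n+i}u = \X_nu + \Delta_{n,i}$ and performing a Taylor expansion of the maps $x \mapsto x^\alpha$ and $x \mapsto x^{1+\alpha}$ around $\X_nu$, then controlling the remainder with the moment bounds of assumption (A2). First I would reduce to the case $i=1$ and induct: if the claim holds for $i$, then by the tower property $\esp\big((\X_{n+i+1}u)^\alpha\mid\F\big) = \esp\big(\esp((\X_{n+i+1}u)^\alpha\mid\mathcal F_{n+i})\mid\F\big)$, and the inner conditional expectation is $(\X_{n+i}u)^\alpha + o((\X_{n+i}u)^\alpha)$ by the $i=1$ case; one then takes the outer expectation and uses the induction hypothesis, after checking that the $o(\cdot)$ terms are uniform enough to pass through $\esp(\cdot\mid\F)$ (this uniformity is what assumptions (A1)–(A2) are designed to give, since the error terms $f_1,f_2$ and the $h_i$-bounds are controlled by $\|\Y_n\|$ and $(\X_nu)$, and $\|\Y_{n+i}\|$ stays controlled).

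For the base case $i=1$, write $\X_{n+1}u = \X_nu + \Delta$ with $\Delta = \Delta_{n,1} = g(\X_n)u + \xi_n u$ (using $vM = v$, so $\X_nMu = \X_nu$). Split $\R$ according to whether $|\Delta| \le \tfrac12 \X_nu$ or not. On the "good" event, use the second-order Taylor bound
\[
\Big| (\X_nu+\Delta)^\alpha - (\X_nu)^\alpha - \alpha(\X_nu)^{\alpha-1}\Delta \Big| \le C (\X_nu)^{\alpha-2}\Delta^2,
\]
take conditional expectation: the linear term gives $\alpha(\X_nu)^{\alpha-1}\esp(\Delta\mid\F)$, which is $O\big((\X_nu)^{\alpha-1}((\X_nu)^\alpha + \|\Y_n\|^\alpha)\big) = o((\X_nu)^\alpha)$ by (A1) (and since $\|\Y_n\| = O(\X_nu)$ or is itself negligible — one must invoke that $\Y_n$ lies in the contracting subspace, cf.\ the discussion preceding the lemma), while the quadratic term is $O\big((\X_nu)^{\alpha-2}\,\sigma^2(\X_n)\big) = O\big((\X_nu)^{\alpha-2}(\X_nu)^{1+\alpha}\big) = O\big((\X_nu)^{2\alpha-1}\big) = o((\X_nu)^\alpha)$ since $\alpha<1$. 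On the "bad" event $\{|\Delta| > \tfrac12\X_nu\}$, bound $(\X_nu+\Delta)^\alpha$ crudely (by $(\X_nu)^\alpha$ if $\alpha\le0$, since then the function is decreasing and $\Delta \ge -g(\X_n)u$ is essentially bounded below; and by $C|\Delta|^\alpha$ if $\alpha>0$) and estimate the probability of the bad event via Markov's inequality at order $2+\delta$: $\pr(|\Delta|>\tfrac12\X_nu\mid\F) \le C(\X_nu)^{-(2+\delta)}\esp(|\Delta|^{2+\delta}\mid\F) \le C(\X_nu)^{-(2+\delta)}\sigma^{2+\delta}(\X_n) = O\big((\X_nu)^{-(2+\delta)+(1+\alpha)(1+\delta/2)}\big)$, and a Hölder split of $\esp(|\Delta|^\alpha \mathds 1_{\text{bad}})$ (when $\alpha>0$) shows the whole bad-event contribution is $O\big((\X_nu)^{\alpha + \frac{\alpha-1}{2}\delta}\big) = o((\X_nu)^\alpha)$. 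The same scheme with $1+\alpha$ in place of $\alpha$ gives the second estimate, the quadratic Taylor term now contributing $O\big((\X_nu)^{\alpha-1}\sigma^2(\X_n)\big) = O\big((\X_nu)^{2\alpha}\big) = o((\X_nu)^{1+\alpha})$.

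The main obstacle I anticipate is not any single computation but the bookkeeping of uniformity in the $o(\cdot)$ terms so that the induction over $i$ actually closes: the error terms depend on $\X_n$ both through $\X_nu$ and through $\Y_n$, and one needs that conditionally on $\F$ the quantity $\|\Y_{n+1}\|$ (and hence $\|\Y_{n+i}\|$) does not blow up relative to $\X_nu$ — this is where primitivity of $M$ and the contraction $\|\y M\| \le \lambda\|\y\|$ with $\lambda<1$ on the complement of $\mathrm{span}(v)$ enter, together with $\|g\| = o(\|\x\|)$ and the variance bound. A clean way to handle this is to prove the $i=1$ statement with an explicit error of the form $\varepsilon(\X_nu) \cdot (\X_nu)^\alpha + C\|\Y_n\|^\alpha (\X_nu)^{-?}$ for a function $\varepsilon \to 0$, and separately record that $\esp(\|\Y_{n+1}\|^\alpha\mid\F)$ and $\|\Y_{n+i}\|$ are suitably dominated; I expect the paper to absorb some of this into Lemma \ref{lemmeineq} or to treat it as routine given the standing assumptions.
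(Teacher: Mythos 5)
Your overall architecture (a one-step estimate followed by induction over $i$ via the tower property) is the same as the paper's, and for $\alpha\geq 0$ your truncated second-order Taylor argument is a workable alternative to the paper's more elementary route (Jensen's inequality together with $(1+r)^\gamma\leq 1+\gamma r$, $(1+r)^\gamma\geq 1-|r|^\gamma$, etc.). The genuine problems are concentrated at $\alpha<0$. First, your treatment of the bad event is based on a false inequality: from \eqref{rec} and $Mu=u$ one has $\Delta_{n,1}=g(\X_n)u+\xi_nu$, and the only a priori lower bound is $\Delta_{n,1}\geq-\X_nu$ (nonnegativity of $\X_{n+1}$), not $\Delta_{n,1}\geq-g(\X_n)u$. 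On the part of the bad event where $\Delta_{n,1}<-\tfrac12\X_nu$ one has $\X_{n+1}u<\tfrac12\X_nu$, so for $\alpha<0$ the bound $(\X_nu+\Delta_{n,1})^\alpha\leq(\X_nu)^\alpha$ fails there, and $(\X_{n+1}u)^\alpha$ can even be arbitrarily large since nothing prevents $\X_{n+1}u$ from being close to $0$. The paper avoids this by the normalization $\X_nu\geq 3$ (the ``without loss of generality'' step in the proof of Theorem 1), which gives $(\X_{n+1}u)^\alpha\leq 1$ on that event, together with the Chebyshev-type tail bound $\pr\left(\X_{n+1}u\leq\tfrac12\X_nu\,\big|\,\F\right)\leq K(\X_nu)^{\alpha-1}$; you need to add both ingredients. (A smaller point of the same kind: on the good event the linear term is $\alpha(\X_nu)^{\alpha-1}\esp\left(\Delta_{n,1}\mathds{1}_{\{|\Delta_{n,1}|\leq\X_nu/2\}}\,\big|\,\F\right)$, not $\alpha(\X_nu)^{\alpha-1}g(\X_n)u$; the correction is handled by H\"older with the $(2+\delta)$-moment from \eqref{moment2+}, but it must be written.)

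Second, the induction step you defer as ``bookkeeping'' is, for $\alpha<0$, the substantive part. For a positive exponent $\gamma$ the paper passes the error through the conditional expectation via the elementary characterization ``$f(r)=o(r^\gamma)$ iff for all $\varepsilon>0$ there is $C_\varepsilon$ with $|f(r)|\leq\varepsilon r^\gamma+C_\varepsilon$'', which is exactly the uniformity you ask for and is easy. But for $\alpha<0$ this characterization is false (a constant is not $o(r^\alpha)$ since $r^\alpha\to0$), so an error recorded only as ``$o((\X_nu)^\alpha)$'' after one step cannot simply be iterated. The paper's one-step bound in the negative case is of explicit power type, $(\X_nu)^\alpha+\mathcal{O}((\X_nu)^{\alpha-1})+\mathcal{O}((\X_nu)^{(3\alpha-1)/2})$, and the induction uses the refined form $|f(r)|\leq\varepsilon r^\alpha+C_\varepsilon r^{(\alpha-1)/2}$ combined with the tail bound above. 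Your proposal neither produces a power-type error for $\alpha<0$ nor explains how a bare $o(\cdot)$ would survive the conditioning, so as written the induction does not close in that regime; once you add the floor $\X_nu\geq 3$, the tail estimate, and a quantitative (power-type) form of the one-step error, the argument matches the paper's in substance.
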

\begin{proof}
We first prove that 
\[
\esp \left(\left(\X_{n+i}u\right)^{\gamma} \big| \F \right)=\left(\X_{n}u\right)^{\gamma}+o\left(\left(\X_{n}u\right)^{\gamma}\right),
\]
for all $\gamma \in [0,2[$ whatever the value of $\alpha$.\\
The result is obvious if $\gamma=0$. We first deal with the case where $0<\gamma \leq 1$. Then for all positive real $r$, $\left(1+r\right)^\gamma\leq 1+\gamma r$, we obtain the upper bound
\begin{align*}
 \esp \left(\left(\X_{n+1}u\right)^{\gamma} \big| \F \right) & \leq \esp \left(\X_{n+1}u \big| \F \right)^{\gamma}
\leq \left(\X_nu+g\left(\X_n\right)u\right)^\gamma\\
& \leq \left(\X_nu\right)^\gamma+\gamma g\left(\X_n\right)u \left(\X_nu\right)^{\gamma-1}.
\end{align*}
By using the inequality $\left(1+r\right)^\gamma \geq 1-|r|^\gamma$, that holds for all $r\geq -1$, we obtain the lower bound
\begin{align*}
 \esp \left(\left(\X_{n+1}u\right)^{\gamma} \big| \F \right) & \geq \esp\left( \left(\X_nu\right)^\gamma- \left|g\left(\X_n\right)u+\xi_nu\right|^\gamma \big| \F \right)\\
& \geq \left(\X_nu\right)^\gamma -2^\gamma \left(g\left(\X_n\right)u\right)^\gamma-2^\gamma \esp\left(\left|\xi_nu\right|^\gamma \big| \F\right).
\end{align*}
Since $ g\left(\X_n\right)u=\mathcal{O}\left(\left(\X_nu\right)^\alpha\right)$
and using
\[
\esp\left(\left|\xi_nu\right|^\gamma \big| \F\right)\leq \esp\left(\left|\xi_nu\right|^2 \big| \F\right)^{\frac{\gamma}{2}} = \mathcal{O}\left(\left(\X_nu\right)^{\frac{\left(1+\alpha\right)\gamma}{2}}\right)
\]
we get 
\[
\esp \left(\left(\X_{n+1}u\right)^{\gamma} \big| \F \right) = \left(\X_nu\right)^\gamma+ o\left( \left(\X_nu\right)^\gamma \right). 
\]
We now deal with the case where $\gamma>1$. Since for all real $r \geq -1$
\[
 \left(1+r\right)^\gamma \leq 1+2^{\gamma-1}|r|^\gamma+2^\gamma |r|,
\]
we obtain
\begin{align*}
 \esp \left(\left(\X_{n+1}u\right)^{\gamma} \big| \F \right) & \leq \left(\X_nu\right)^\gamma + 2^{\gamma-1}\esp\left(\left|g\left(\X_n\right)u+\xi_nu\right|^\gamma \big| \F \right)\\
& \quad + 2^\gamma \left(\X_nu\right)^{\gamma-1} \esp\left(\left|g\left(\X_n\right)u+\xi_nu\right| \big| \F \right)\\
& \leq \left(\X_nu\right)^\gamma +\mathcal{O} \left(\left(\X_nu\right)^{\frac{\left(1+\alpha\right)\gamma}{2}}\right)+\mathcal{O}\left(\left(\X_nu\right)^{\gamma+\frac{\alpha-1}{2}}\right).
\end{align*}
The lower bound is an easy consequence of Jensen's inequality:
\[
 \esp \left(\left(\X_{n+1}u\right)^{\gamma} \big| \F \right) \geq \esp \left(\X_{n+1}u \big| \F \right)^{\gamma} \geq  \left(\X_{n}u\right)^{\gamma}.
\]
We have proved that 
\begin{equation}\label{littleo}
 \esp \left(\left(\X_{n+1}u\right)^{\gamma} \big| \F \right)= \left(\X_{n}u\right)^{\gamma}+ o\left(\left(\X_{n}u\right)^{\gamma}\right),
\end{equation}
for $\gamma \in [0,2[$.
We will prove that 
\begin{equation}\label{induc}
\esp\left(f(\X_{n+1}u)\big| \F\right)=o\left(\left(\X_nu\right)^\gamma\right), 
\end{equation}
if $f$ is a real-valued function such that $f(r)=o\left(r^\gamma\right)$ when $r$ tends to infinity.
We recall that $f(r)=o\left(r^\gamma\right)$ if and only if for all $\varepsilon>0$ there exists $C_\varepsilon>0$ such that $|f(r)|\leq \varepsilon r^\gamma+C_\varepsilon$ because $\gamma>0$.\\
Let $f$ be a real-valued function such that $f(r)=o(r^\gamma)$, $\varepsilon>0$ and $C_\varepsilon>0$ such that $|f(r)|\leq \varepsilon r^\gamma+C_\varepsilon$. By \eqref{littleo},
\begin{align*}
 \esp(|f(\X_{n+1}u)|\big|\F) & \leq  \esp(\varepsilon(\X_{n+1}u)^\gamma+C_\varepsilon \big|\F)\\
& \leq \varepsilon(\X_{n}u)^\gamma+C_\varepsilon+\varepsilon o((\X_nu)^\gamma)\\
& \leq 2\varepsilon(\X_{n}u)^\gamma+C_\varepsilon+C_1.
\end{align*}
Thus, we obtain \eqref{induc}.
Since we get \eqref{littleo} and \eqref{induc}, the result follows by induction.

We end the proof with the case $-1<\alpha<0$.
The lower bound is again a consequence of Jensen's inequality: 
\begin{align*}
 \esp \left(\left(\X_{n+1}u\right)^{\alpha} \big| \F \right) & \geq \left(\X_nu+g(\X_n)u\right)^\alpha\\
& \geq \left(\X_nu\right)^\alpha+\alpha g(\X_n)u\left(\X_nu\right)^{\alpha-1}.
\end{align*}
For the upper bound, we first majorize the probability that $\X_{n+1}u$ is smaller than $\frac{\X_nu}{2}$ by Markov's inequality: 
\begin{align*}
 \pr\left(\X_{n+1}u\leq \frac{\X_nu}{2}\big| \F\right) & = \pr\left(\xi_nu\leq -\frac{\X_nu}{2}-g(\X_n)u\big| \F\right)\\
& \leq \pr\left(\xi_nu\leq -\frac{\X_nu}{2}\big| \F\right)\leq \pr\left((\xi_nu)^2 \geq \frac{(\X_nu)^2}{4}\big| \F\right)\\
& \leq \frac{4\esp\left((\xi_nu)^2 |\F\right)}{(\X_nu)^2}\leq K (\X_nu)^{\alpha-1}.
\end{align*}
Therefore, since for all $r>-\frac{1}{2}$, $(1+r)^\alpha \leq 1+4^{-\alpha}|r|$, we obtain
\begin{align*}
\esp \left(\left(\X_{n+1}u\right)^{\alpha} \big| \F \right) & = \esp \left((\mathds{1}_{\{\X_{n+1}u\leq \frac{\X_nu}{2}\}}+\mathds{1}_{\{\X_{n+1}u> \frac{\X_nu}{2}\}})\left(\X_{n+1}u\right)^{\alpha} \big| \F \right) \\
& \leq \pr\left(\X_{n+1}u \leq \frac{\X_nu}{2}\big| \F\right)\\
& \quad +\esp\left((\X_nu)^\alpha \left(1+4^{-\alpha} \frac{|g(\X_n)u+\xi_nu|}{\X_nu}\right)\big|\F\right)\\
& \leq (\X_nu)^\alpha+\mathcal{O}((\X_nu)^{\alpha-1})+\mathcal{O}((\X_nu)^{\frac{3\alpha-1}{2}}).
\end{align*}
We conclude in the same way as above by using that $f(r)=o\left(r^\alpha\right)$ if and only if for all $\varepsilon>0$ there exists $C_\varepsilon>0$ such that $|f(r)|\leq \varepsilon r^\alpha+C_\varepsilon r^{\frac{-1+\alpha}{2}}$.
\end{proof}

\begin{lemma}\label{Y}
 Let us assume (A1) and (A2). For all $\gamma \in [0,2]$  there exists $C \geq 0$ such that for all $k,n \in \N$
\[
 \sum_{i=0}^{k-1} \esp\left(\|\Y_{n+i}\|^\gamma \big|\F\right)
\leq C\|\Y_n\|^\gamma+o\left(\left(\X_nu\right)^\gamma\right).
\]
\end{lemma}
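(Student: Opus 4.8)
The idea is that $M$ contracts strictly in every direction transverse to the Perron eigenvector, so $\Y_n$ cannot grow. Set $\widehat M:=M-uv$. Since $Mu=u$, $vM=v$ and $vu=1$, the hyperplane $H=\{y\in\R^d:\ yu=0\}$ — which contains all the $\Y_n$, because $\Y_nu=\X_n(\mathrm{I}-uv)u=0$ — is stable under $y\mapsto y\widehat M$, one has $v\widehat M=0$, and $\R^d=H\oplus\mathrm{span}(v)$. By the Perron--Frobenius theorem the eigenvalues of $\widehat M$ are $0$ together with the eigenvalues of $M$ distinct from $1$, all of modulus strictly less than $1$ by primitivity, so the restriction of $\widehat M$ to $H$ has spectral radius $\rho_0<1$. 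Fixing $\theta\in(\rho_0,1)$, there is a norm $\|\cdot\|_*$ on $\R^d$, equivalent to $\|\cdot\|$, with $\|y\widehat M\|_*\le\theta\|y\|_*$ for every $y\in H$; since the conclusion is insensitive (up to the value of $C$ and the $o$-term) to the choice of norm, I will work throughout with $\|\cdot\|_*$. The case $\gamma=0$ being trivial, assume $\gamma\in(0,2]$.

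\textbf{One-step inequality.} First I would prove that, for a \emph{fixed} $\rho<1$ and a \emph{fixed} function $\psi$ with $\psi(t)=o(t^\gamma)$,
\[
\esp\big(\|\Y_{n+1}\|_*^{\gamma}\,\big|\,\mathcal F_n\big)\ \le\ \rho\,\|\Y_n\|_*^{\gamma}+\psi(\X_nu).
\]
From \eqref{rec}, $\Y_{n+1}=\Y_n\widehat M+g(\X_n)(\mathrm{I}-uv)+\xi_n(\mathrm{I}-uv)$, all three terms lying in $H$, whence $\|\Y_{n+1}\|_*\le\theta\|\Y_n\|_*+\|g(\X_n)(\mathrm{I}-uv)\|_*+\|\xi_n(\mathrm{I}-uv)\|_*$. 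Using $\|g(\x)\|=o(\|\x\|)$ and the comparability of $\|\x\|$ with $\x u+\|\y\|$, write $\|g(\x)\|\le\phi(\x u)+\omega(\|\y\|)$ with fixed $\phi,\omega$ of order $o(t)$; bound the noise by (A2) and Jensen, $\esp(\|\xi_n(\mathrm{I}-uv)\|_*^{\gamma}\,|\,\mathcal F_n)\le\mathrm{const}\cdot\sigma^{\gamma}(\X_n)$, and then $\sigma^2(\X_n)\le d_1(\X_nu)^{1+\alpha}+d_2\|\Y_n\|^{1+\alpha}+o\big((\X_nu)^{1+\alpha}\big)$ by (A1). Raising to the power $\gamma$ (subadditivity if $\gamma\le1$; conditional Minkowski if $\gamma\ge1$, which gives instead $\esp(\|\Y_{n+1}\|_*^{\gamma}|\mathcal F_n)^{1/\gamma}\le\theta'\|\Y_n\|_*+\zeta(\X_nu)$ with $\theta'<1$, $\zeta(t)=o(t)$) and absorbing into the contraction factor every contribution of the form $\varepsilon\|\Y_n\|^{\gamma}$ — here $-1<\alpha<1$ is essential, for it makes $\|\Y_n\|^{(1+\alpha)\gamma/2}=o(\|\Y_n\|^{\gamma})$ — yields the displayed inequality; every $\X_nu$-contribution either sits at an exponent strictly below $\gamma$ or carries a freely small constant, so $\psi(\X_nu)$ is a genuine $o\big((\X_nu)^{\gamma}\big)$.

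\textbf{Iteration.} With $\F=\mathcal F_n$ fixed, set $a_i=\esp(\|\Y_{n+i}\|_*^{\gamma}|\F)^{1/\gamma}$ when $\gamma\ge1$ (and $b_i=\esp(\|\Y_{n+i}\|_*^{\gamma}|\F)$ when $\gamma<1$); the tower property together with the one-step estimate gives $a_{i+1}\le\theta'a_i+\esp(\zeta(\X_{n+i}u)^{\gamma}|\F)^{1/\gamma}$ (respectively $b_{i+1}\le\rho\,b_i+\esp(\psi(\X_{n+i}u)|\F)$). Solving this linear recursion,
\[
\sum_{i=0}^{k-1}a_i\ \le\ \frac{a_0}{1-\theta'}+\frac1{1-\theta'}\sum_{j=0}^{k-2}\esp\big(\zeta(\X_{n+j}u)^{\gamma}\,\big|\,\F\big)^{1/\gamma}.
\]
Since $\zeta(t)^{\gamma}=o(t^{\gamma})$, Lemma \ref{X} — precisely the estimate \eqref{littleo}, whose proof also covers $\gamma=2$ because all the error exponents appearing there are strictly below $2$ — gives, for every $\varepsilon>0$, $\esp(\zeta(\X_{n+j}u)^{\gamma}|\F)\le\varepsilon(\X_nu)^{\gamma}+C_\varepsilon$ uniformly in $j$; as $k$ is fixed, these $k-1$ terms sum to $o\big((\X_nu)^{\gamma}\big)$. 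Finally $a_0^{\gamma}=\|\Y_n\|_*^{\gamma}$ and $\sum_i a_i^{\gamma}\le(\sum_i a_i)^{\gamma}$ for $\gamma\ge1$ (for $\gamma<1$ the conclusion is read off directly from the $b_i$-recursion), so $\sum_{i=0}^{k-1}\esp(\|\Y_{n+i}\|_*^{\gamma}|\F)\le C\|\Y_n\|_*^{\gamma}+o\big((\X_nu)^{\gamma}\big)$ with $C$ independent of $k$; reverting to $\|\cdot\|$ completes the proof.

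\textbf{Main difficulty.} The delicate point is exactly the one just flagged: keeping, at each step, the $\|\Y_n\|^{\gamma}$-coefficient strictly below $1$ \emph{and} the $\X_nu$-input a genuine $o\big((\X_nu)^{\gamma}\big)$, not merely $\le\varepsilon(\X_nu)^{\gamma}+C_\varepsilon$ with $\varepsilon$ frozen. If an $\X_nu$-term appeared at exponent exactly $\gamma$ with a fixed positive constant, summing $k$ of them along the iteration would leave a term of size $(\X_nu)^{\gamma}$ and destroy the estimate; this is what makes the otherwise routine bookkeeping around $g$ and $\sigma^2$, together with the strict inequality $\alpha<1$, indispensable.
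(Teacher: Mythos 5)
Your proof is correct and follows essentially the same route as the paper: both rest on the fact that $M-uv$ has spectral radius strictly less than one on the hyperplane $\{y:\ yu=0\}$, so that $\Y_n$ is contracted geometrically while the forcing terms coming from $g$ and $\xi_n$ are controlled through (A1), (A2) and Lemma \ref{X}, the geometric series being exactly what makes the constant in front of $\|\Y_n\|^\gamma$ independent of $k$. The differences are only organizational — you use an adapted norm and iterate a one-step conditional estimate via the tower property, whereas the paper expands $\Y_{n+i}$ explicitly and bounds $\|(M-uv)^m\|$ through Gelfand's formula — and your appeal to \eqref{littleo} at $\gamma=2$ (needed because you bound $g$ only by $o(\|\x\|)$ rather than by $\mathcal{O}\left((\x u)^\alpha\right)$ as the paper does) is indeed legitimate, since all error exponents in that proof remain strictly below $\gamma$.
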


\begin{proof}
 We first write a recurrence relation for $\Y_n$ by \eqref{rec}: 
\begin{align*}
\Y_{n+1}& =\Y_nM+g\left(\X_n\right)\left(\mathrm{I}-uv\right)+\xi_n\left(\mathrm{I}-uv\right)\\
& =\Y_n\left(M-uv\right)+g\left(\X_n\right)\left(\mathrm{I}-uv\right)+\xi_n\left(\mathrm{I}-uv\right)
\end{align*}
because $\Y_nu=0$.
Hence 
\[
\Y_{n+i}=\Y_n\left(M-uv\right)^i+\sum_{j=0}^{i-1}\left(g\left(\X_{n+j}\right)+\xi_{n+j}\right)\left(\mathrm{I}-uv\right)\left(M-uv\right)^{i-1-j}.
\]
The Perron-Frobenius theorem states that the spectral radius $\lambda$ of $M-uv$ is less than $1$.  Let $\lambda_1 \in \left(\lambda,1\right)$. We recall that by Gelfand's formula (see \cite[p. 349]{horn}), we have
\[
 \lambda = \underset{m \rightarrow \infty}{\lim} \|(M-uv)^m\|^{1/m},
\]
for any matrix norm. Consequently, there exists a constant $C$ which does not depend on $i$ such that we obtain
\[
\| \Y_{n+i}\|\leq C\left(\lambda_1^i\|\Y_n\|+\sum_{j=0}^{i-1}
\left(\|g\left(\X_{n+j}\right)\|+\|\xi_{n+j}\|\right)\right).
\]
Hence, by \eqref{basicineq} below, we obtain
\[
 \| \Y_{n+i}\|^\gamma \leq C2^\gamma\lambda_1^{\gamma i}\|\Y_n\|^\gamma+C2^\gamma\sum_{j=0}^{i-1} i^\gamma
\left(\|g\left(\X_{n+j}\right)\|^\gamma+\|\xi_{n+j}\|^\gamma\right).
\]
Since
\[
\esp\left(\|g\left(\X_{n+j}\right)\|^\gamma \big| \F \right)=\mathcal{O}\left(\left(\X_nu\right)^{\alpha \gamma}\right)
\;\text{and}\;
 \esp\left(\|\xi_{n+j}\|^\gamma \big| \F \right)=\mathcal{O}\left(\left(\X_nu\right)^{\frac{\left(1+\alpha\right)\gamma}{2}}\right),
\]
by \eqref{moment2+}, we get by summation
\[
 \sum_{i=0}^{k-1} \esp\left(\|\Y_{n+i}\|^\gamma \big|\F\right)
\leq C2^\gamma \frac{1}{1-\lambda_1^\gamma}\|\Y_n\|^\gamma+o\left(\left(\X_nu\right)^\gamma\right).
\]

\end{proof}

\begin{proof}[Proof of Lemma \ref{lemmeineq}]
We first prove \eqref{moment1}: \\
The proof is an easy consequence of Lemma \ref{X}, Lemma \ref{Y} and \eqref{ineq}:
\begin{align*}
\esp\left(\Delta_{n,k} \big|\F\right) & \leq
\esp\left( \sum_{i=0}^{k-1}g\left(\X_{n+i}\right)u \big|\F\right) \\
& \leq \esp\left( \sum_{i=0}^{k-1}c_1\left(\X_{n+i}u\right)^\alpha+c_2\|\Y_{n+i}\|^\alpha+f_1\left(\X_{n+i}\right) \big|\F\right)\\
& \leq kc_1\left(\X_nu\right)^\alpha+C'\|\Y_{n}\|^\alpha+o\left(\left(\X_{n}u\right)^\alpha \right).
\end{align*}
The same proof with $-c_2$ instead of $c_2$ gives the lower bound.

We now prove inequality \eqref{moment2}. As for \eqref{moment1}, the main point is to show that $d_2'$ does not depend on $k$.

By means of Lemma \ref{X}, Lemma \ref{Y} and \eqref{ineq}, we get
\begin{align*}
\esp\left(|\Delta_{n,k}|^2 \big|\F\right)& \leq \esp\left(\left(\sum_{i=0}^{k-1}\left\{g\left(\X_{n+i}\right)u+\xi_{n+i}u\right\}\right)^2\big|\F\right)\\
& \leq \esp\left(\sum_{i=0}^{k-1}\left(\xi_{n+i}u\right)^2+\left(\sum_{i=0}^{k-1}g\left(\X_{n+i}\right)u\right)^2 \big| \F \right)\\
& +2 \esp \left(\left(\sum_{i=0}^{k-1}\xi_{n+i}u\right)\left(\sum_{i=0}^{k-1}g\left(\X_{n+i}\right)u\right)\big|\F\right)\\
& \leq kd_1\left(\X_nu\right)^{1+\alpha}+ d_2\|\Y_n\|^{1+\alpha}+o\left(\left(\X_nu\right)^{1+\alpha}\right)\\
& +\mathcal{O}\left(\left(\X_nu\right)^{2\alpha}\right)+\mathcal{O}\left(\left(\X_nu\right)^{\frac{\alpha\left(1+\alpha\right)}{2}}\right),
\end{align*}
and the proof for the lower bound is similar.
We conclude with the proof of  \eqref{moment2bis}.\\
By Markov's inequality: 
 \begin{align*}
\esp&\left(|\Delta_{n,k}|^2\mathds{1}_{\left\{\Delta_{n,k} \geq \varepsilon \X_nu\right\}} \big|\F\right) \\
& \leq \esp\left(|\Delta_{n,k}|^2\mathds{1}_{\left\{\left(\Delta_{n,k}\right)^\delta \geq \left(\varepsilon \X_nu\right)^\delta\right\}} \big|\F\right)\\
& \leq \esp\left(\frac{|\Delta_{n,k}|^{2+\delta}}{\left(\varepsilon \X_nu\right)^\delta}\big|\F\right)\\
& \leq \frac{\left(2k\right)^{2+\delta}}{\left(\varepsilon \X_nu\right)^\delta} \esp\left(\sum_{i=0}^{k-1}\left|g\left(\X_{n+i}\right)u\right|^{2+\delta}+\left|\xi_{n+i}u\right|^{2+\delta}\big|\F\right).
\end{align*}
Since $\esp\left(\left|g\left(\X_{n+i}\right)u\right|^{2+\delta}\big| \F\right)=\mathcal{O}\left(\left(\X_nu\right)^{2\alpha+\alpha\delta}\right)$ by Lemma \ref{X} and \\$\esp\left(\left|\xi_{n+i}u\right|^{2+\delta}\Big|\F\right)=\mathcal{O}\left(\left(\X_nu\right)^{1+\alpha+\frac{1+\alpha}{2}\delta}\right)$ by \eqref{moment2+}, we obtain
\begin{align*}
\esp&\left(|\Delta_{n,k}|^2\mathds{1}_{\left\{\Delta_{n,k} \geq \varepsilon \X_nu\right\}} \big|\F\right) \\
& \leq \frac{\left(2k\right)^{2+\delta}}{\left(\varepsilon \X_nu\right)^\delta}\left(e_1\left(k\right) \left(\X_nu\right)^{2\alpha+\alpha\delta}+e_2\left(k\right)\left(\X_nu\right)^{1+\alpha+\frac{1+\alpha}{2}\delta}\right)\\
& \leq e_1'\left(k, \varepsilon \right) \left(\X_nu\right)^{1+\alpha+\frac{\alpha-1}{2}\delta},
\end{align*}
which is the desired inequality.
\end{proof}

We now prove Lemmas \ref{loglyapu} and \ref{1/loglyapu}.
\begin{proof}[Proof of Lemma \ref{loglyapu}]
 We first recall an inequality proved in \cite{kersting}: 
If $\varepsilon>0$, $x>0$ and $h>-x$, then
\begin{equation}\label{kerst1}
\log\left(x+h\right)\leq \log x +\frac{h}{x}-\frac{h^2 \mathds{1}_{\left\{h\leq \varepsilon x\right\}}}{2\left(1+\varepsilon\right)x^2}.
\end{equation}
Let $k\in \N$ and $\varepsilon>0$, both to be fixed later on. 
We apply inequality \eqref{kerst1} with $x=\X_nu$ and $h=\Delta_{n,k}$:
\begin{align*}
& \esp\left(\log\left(\X_{n+k}u\right)\big|\F\right)\leq \\
&  \log\left(\X_nu\right)+\frac{\esp\left(\Delta_{n,k}\big|\F\right)}{\X_nu}
 -\frac{\esp\left(|\Delta_{n,k}|^2\big|\F\right)}{2\left(1+\varepsilon\right)\left(\X_nu\right)^2}
 +\frac{\esp\left(|\Delta_{n,k}|^2\mathds{1}_{\left\{\Delta_{n,k} > \varepsilon \X_nu\right\}}
 \big|\F\right)}{2\left(1+\varepsilon\right)\left(\X_nu\right)^2}.
\end{align*}
Using inequalities \eqref{moment1}, \eqref{moment2} and \eqref{moment2bis} from Lemma \ref{lemmeineq}  we obtain
\begin{align*}
& \esp\left(\log\left(\X_{n+k}u\right)\big|\F\right)\leq \\
& \log\left(\X_nu\right)+\frac{c_1k\left(\X_nu\right)^{\alpha}+c_2' \| \Y_n \|^{\alpha}+o\left(\left(\X_nu\right)^\alpha\right)}{\X_nu}\\
& -\frac{kd_1\left(\X_nu\right)^{1+\alpha}-d_2'\|\Y_n\|^{1+\alpha}+o\left(\left(\X_nu\right)^{1+\alpha}\right)}{2\left(1+\varepsilon\right)\left(\X_nu\right)^2}
+\frac{\mathcal{O}\left( \left(\X_nu\right)^{1+\alpha+\frac{\alpha-1}{2}\delta}\right)}{2\left(1+\varepsilon\right)\left(\X_nu\right)^2}.
\end{align*}
By the Perron-Frobenius Theorem \cite{seneta}, all coordinates of $u$ are positive.
Therefore, by definition of $\Y_n$, there exists $b>0$ such that for every $n$,
\begin{equation}\label{xsury}
\|\Y_n\|\leq b \X_nu.
\end{equation}
We obtain
\begin{align*}
& \esp\left(\log\left(\X_{n+k}u\right)\big|\F\right)\leq \\
& \log\left(\X_nu\right)+\frac{c_1k\left(\X_nu\right)^{\alpha}+c_2'b^\alpha \left(\X_nu\right)^{\alpha}+o\left(\left(\X_nu\right)^\alpha\right)}{\X_nu}\\
&-\frac{kd_1\left(\X_nu\right)^{1+\alpha}-d_2'b^{1+\alpha}\left(\X_nu\right)^{1+\alpha}+o\left(\left(\X_nu\right)^{1+\alpha}\right)}{2\left(1+\varepsilon\right)\left(\X_nu\right)^2}
+\frac{\mathcal{O}\left(\left(\X_nu\right)^{1+\alpha+\frac{\alpha-1}{2}\delta}\right)}{2\left(1+\varepsilon\right)\left(\X_nu\right)^2}.
\end{align*}
We first choose $\varepsilon>0$ such that $c_1<\frac{d_1}{2\left(1+\varepsilon\right)}$. We now choose $k$ such that
\[
k\left(c_1-\frac{d_1}{2\left(1+\varepsilon\right)}\right)+c_2'b^\alpha+\frac{d_2'b^{1+\alpha}}{2\left(1+\varepsilon\right)}<0.
\]
Thus there exists $s>0$ such that,
\[
\esp\left(\log\left(\X_{n+k}u\right)\big|\F\right)\leq \log\left(\X_nu\right), \text{ if } \X_nu>s.
\]
\end{proof}

\begin{proof}[Proof of Lemma \ref{1/loglyapu}]
 We recall another inequality proved in \cite{kersting}.
For $x\geq 3$, let
\[
L\left(x\right)=\left(\log x\right)^{-1}.
\]
There exists $C_2>0$ such that for any $x\geq 3$, $h>3-x$ and $0<\delta \leq 1$ then
\begin{equation}\label{kerst2}
L\left(x+h\right) \leq L\left(x\right)+L'\left(x\right)h+
\frac{L''\left(x\right)h^2}{2}+C_2\frac{|h|^{2+\delta}}{\left(\log x\right)^{2}x^{2+\delta}}+\mathds{1}_{\left\{h\leq-\frac{x}{2}\right\}}.
\end{equation}

As in the first case, we prove that $\esp\left(L\left(\X_{n+k}u\right)\big| \F \right)\leq L\left(\X_nu\right)$ for some fixed $k$ and $\X_nu$ large enough.

We apply inequality \eqref{kerst2} with $x=\X_nu$, $h=\Delta_{n,k}$ and $k$ an integer to be fixed later on to get
\begin{align*}
& \esp\left(L\left(\X_{n+k}u\right)\big| \F \right)  \leq \\
& L\left(\X_nu\right) -\frac{\esp\left(\Delta_{n,k} \big|\F\right)}{\left(\X_nu\right)\left(\log \left(\X_nu\right)\right)^{2}}
 \quad +\frac{\esp\left(|\Delta_{n,k}|^2\big|\F\right)}{2\left(\X_nu\right)^2\left(\log\left(\X_nu\right)\right)^{2}}+\frac{2\esp\left(|\Delta_{n,k}|^2 \big|\F\right)}{2\left(\X_nu\right)^2\left(\log\left(\X_nu\right)\right)^{3}}\\
& + C_2\frac{\esp\left(|\Delta_{n,k}|^{2+\delta} \big|\F\right)}{\left(\log \left(\X_nu \right)\right)^{2}\left(\X_nu\right)^{2+\delta}} + \esp\left(\mathds{1}_{\left\{\Delta_{n,k} \leq -\frac{\X_nu}{2}\right\}} \big|\F\right).
\end{align*}

We start with the estimate
\begin{align*}
\esp\left(\mathds{1}_{\left\{\Delta_{n,k} \leq -\frac{\X_nu}{2}\right\}} \big|\F\right) & \leq \esp\left(\mathds{1}_{\left\{2^{2+\delta}\frac{\left|\Delta_{n,k}\right|^{2+\delta}}{\left(\X_nu\right)^{2+\delta}} \geq 1\right\}} \big|\F\right)\\
& \leq \esp\left(2^{2+\delta}\frac{\left|\Delta_{n,k}\right|^{2+\delta}}{\left(\X_nu\right)^{2+\delta}}\big|\F\right),
\end{align*}
that follows easily from Markov's inequality.
We now use the basic inequality 
\begin{equation}\label{basicineq}
\left(a+b\right)^{2+\delta} \leq 2^{2+\delta} \left(a^{2+\delta}+b^{2+\delta} \right), \, a,b>0,
\end{equation}
and the facts (resulting from \eqref{ineq} and \eqref{moment2+}) that there exist some positive real numbers $A$ and $B$ such that 
\[
\esp \left( \left|g \left(\X_{n+i}\right)u\right|^{2+\delta} \big| \F \right)\leq A \left(\X_nu\right)^{\alpha\left(2+\delta\right)},
\]
and 
\[
 \esp\left(\left|\xi_{n+i}u\right|^{2+\delta} \big| \F \right)\leq B \left(\X_nu\right)^{\left(\frac{\alpha+1}{2}\right)\left(2+\delta\right)},
\]
to obtain the upper bound
\begin{align*}
 \esp\left(\left|\Delta_{n,k}\right|^{2+\delta} \big| \F \right)& \leq (2k)^{2+\delta} \esp\left(\sum_{i=0}^{k-1}\left|g\left(\X_{n+i}\right)u\right|^{2+\delta}+\left|\xi_{n+i}u\right|^{2+\delta}\big| \F \right)\\
& \leq C_3(k) \left(\X_nu\right)^{\left(\frac{\alpha+1}{2}\right)\left(2+\delta\right)}.
\end{align*}

Therefore, there exists $C_4(k)$ such that
\[
\esp\left(\mathds{1}_{\left\{\Delta_{n,k} \leq -\frac{\X_nu}{2}\right\}} \big|\F\right) \leq C_4(k) \left(\X_nu\right)^{\left(\frac{\alpha-1}{2}\right)\left(2+\delta\right)}. 
\]

We use the inequalities \eqref{moment1}, \eqref{moment2} and \eqref{moment2bis} from Lemma \ref{lemmeineq} and inequality \eqref{xsury}: 
\begin{align*}
& \esp\left(L\left(\X_{n+k}u\right)\big| \F \right)  \\
& \leq  L\left(\X_nu\right)  -\frac{\left(c_1k\left(\X_nu\right)^{\alpha}-c_2' \| \Y_n \|^{\alpha} +o\left(\left(\X_nu\right)^\alpha\right)\right)}{\left(\X_nu\right)\left(\log \left(\X_nu\right)\right)^{2}}\\
& \quad + \frac{\left(kd_1\left(\X_nu\right)^{1+\alpha} +d_2'\|\Y_n\|^{1+\alpha} +o\left(\left(\X_nu\right)^{1+\alpha}\right)\right)}{2\left(\X_nu\right)^2\left(\log \left(\X_nu\right)\right)^{2}}\\
& \quad + \frac{2\left(kd_1\left(\X_nu\right)^{1+\alpha}+d_2'\|\Y_n\|^{1+\alpha}+o\left(\left(\X_nu\right)^{1+\alpha}\right)\right)}{2\left(\X_nu\right)^2\left(\log \left(\X_nu \right)\right)^{3}}\\
&  \quad +C_2\frac{\mathcal{O}\left(\left(\X_nu\right)^{1+\alpha+\frac{\alpha-1}{2}\delta}\right)}{\left(\log \left(\X_nu \right) \right)^{2}\left(\X_nu\right)^{2+\delta}}+C_4(k) \left(\X_nu\right)^{\frac{\left(\alpha-1\right)\left(2+\delta\right)}{2}}\\
& \leq    L\left(\X_nu\right) +\frac{k\left(\frac{d_1}{2}-c_1\right)+b_2'}{\left(\X_nu\right)^{1-\alpha}\left(\log \left(\X_nu \right)\right)^{2}}
\quad +o\left(\frac{1}{\left(\X_nu\right)^{1-\alpha}\left(\log\X_nu\right)^{2}}\right).
\end{align*}
Since $d_1/2<c_1$, we first choose $k$ such that 
\[
k\left(\frac{d_1}{2}-c_1\right)+b_2'<0,
\]
with $b_2'=b^{1+\alpha}d_2'/2+b^\alpha c_2'$.
Then there exists $s>0$ such that, 
\[
\esp\left(L\left(\X_{n+k}u\right) \big| \F \right)  \leq L\left(\X_nu\right), \text{ if } \X_nu>s.
\]
\end{proof}

\noindent \textbf{Acknowledgement.}  \\
The author thanks Vincent Bansaye and Jean-Ren\'e Chazottes for many helpful discussions on the subject of this paper. He is also very grateful to the referee for his careful reading of the original manuscript.
This article benefited from the support of the ANR MANEGE (ANR-09-BLAN-0215) and from the Chair ``Mod\'elisation Math\'ematique et Biodiversit\'e'' of Veolia Environnement - Ecole Polytechnique - Museum National d'Histoire Naturelle - Fondation X.
\bibliographystyle{plain}
\bibliography{synthese}
\end{document}